\documentclass[12pt]{amsart}
\usepackage[alphabetic]{amsrefs}
\usepackage{mathdots, blkarray}
\usepackage{multicol}
\usepackage{graphicx}
\usepackage{amscd}
\usepackage{upgreek}
\usepackage{stmaryrd}
\usepackage{longtable}
\usepackage[T1]{fontenc}
\usepackage{latexsym, amsmath, amssymb, amsthm}
\usepackage[svgnames]{xcolor}
\usepackage{rsfso}
\usepackage[mathscr]{eucal}
\usepackage{mathtools}
\usepackage{mathptmx}
\usepackage{titletoc}
\usepackage{wrapfig}
\usepackage{float}
\usepackage{xypic}
\usepackage{microtype}
\usepackage{dsfont}
\usepackage{xcolor}
\usepackage{color}
\usepackage[colorlinks = true,
            linkcolor  = DarkBlue,
            urlcolor   = DarkRed,
            citecolor  = DarkGreen]{hyperref}
\usepackage{hyperref}
\allowdisplaybreaks	
\linespread{1.06}
\usepackage[centering, includeheadfoot, hmargin=1.0in, tmargin=0.5in, 
  bmargin=0.6in, headheight=6pt]{geometry}
\newtheorem{theorem}{Theorem}[section]
\newtheorem{prop}[theorem]{Proposition}
\newtheorem{defn}[theorem]{\rm\textsc{Definition}}
\newtheorem{lem}[theorem]{Lemma}
\newtheorem{coro}[theorem]{Corollary}

\newtheorem{thm}[theorem]{Theorem}

\newtheorem{rem}[theorem]{\rm\textsc{Remark}}
\newtheorem{exam}[theorem]{\rm\textsc{Example}}


\DeclareMathOperator{\Hom}{Hom}

 
\newcommand{\B}{\mathcal{B}} 
 
\newcommand{\PP}{\mathcal{P}} 
\newcommand{\NN}{\mathcal{N}} 
\newcommand{\C}{\mathbb{C}}

\newcommand{\N}{\mathbb{N}}

 
\newcommand{\RA}{\Longrightarrow} 
 
\newcommand{\ra}{\longrightarrow}

\newcommand{\ha}{\widehat}

\newcommand{\hbo}{$\hfill\Diamond$}

\begin{document}
\title{Deformations of left-symmetric color algebras} 
\def\shorttitle{Deformations of left-symmetric color algebras}

\author{Yin Chen}
\address{School of Mathematics and Physics, Key Laboratory of ECOFM of 
Jiangxi Education Institute, Jinggangshan University,
Ji'an 343009, Jiangxi, China \& Department of Finance and Management Science, University of Saskatchewan, Saskatoon, SK, Canada, S7N 5A7}
\email{yin.chen@usask.ca}

\author{Runxuan Zhang}
\address{Department of Mathematics and Information Technology, Concordia University of Edmonton, Edmonton, AB, Canada, T5B 4E4}
\email{runxuan.zhang@concordia.ab.ca}

\begin{abstract}
We develop a deformation theory for finite-dimensional left-symmetric color algebras, which can be used to construct new algebraic structures and interpret left-symmetric color cohomology spaces of lower degrees. We explore equivalence classes and extendability of deformations for a fixed left-symmetric color algebra, demonstrating that each infinitesimal deformation is nontrivially extendable if the third cohomology subspace of degree zero is trivial. We also study Nijenhuis operators and  Rota-Baxter operators on a left-symmetric color algebra, providing a better understanding of the equivalence class of the trivial infinitesimal deformation.
\end{abstract}

\date{\today}
\thanks{2020 \emph{Mathematics Subject Classification}. 17D25; 17B75.}
\keywords{Left-symmetric color algebra; cohomology; deformation.}
\maketitle \baselineskip=16.5pt

\dottedcontents{section}[1.16cm]{}{1.8em}{5pt}
\dottedcontents{subsection}[2.00cm]{}{2.7em}{5pt}

\section{Introduction}
\setcounter{equation}{0}
\renewcommand{\theequation}
{1.\arabic{equation}}
\setcounter{theorem}{0}
\renewcommand{\thetheorem}
{1.\arabic{theorem}}

\noindent The present article is a continuation of the work in \cite{CZ25} where we developed a new cohomology theory for left-symmetric color algebras. The deformation theory of algebraic structures has been a classical field with a long history that dates back to  Gerstenhaber's work \cite{Ger64} (for associative rings and algebras) and Nijenhuis-Richardson \cite{NR64}
(for Lie algebras but focusing on connections between deformation and cohomology). The deformation theory of nonassociative algebras has become an active topic because it serves not only as an indispensable tool for constructing new algebraic structures, but also has substantial ramifications in the understanding of the corresponding cohomology theory. 
The primary objective of this article is to develop a deformation theory for finite-dimensional left-symmetric color algebras
and apply it to interpret left-symmetric color cohomology spaces. 

Connecting with various branches of mathematics and mathematical physics such as left-invariant affine structures on Lie groups, rooted tree algebras, and vertex algebras in conformal field theory, left-symmetric (or pre-Lie) algebras have been a significant and popular subject in the theory of nonassociative algebras; see for example \cite{Bai20} for an introductory reference. 
Left-symmetric color algebras form a family of nonassociative and Lie admissible graded algebras, generalizing left-symmetric algebras and
left-symmetric superalgebras (see \cite{ZB12,ZHB11}) and also having a close relationship with Lie color algebras.   
While the theory of Lie color algebras has been relatively well understood, left-symmetric color algebras are in their early stages with only a few results known; see  \cite{CCD17},  \cite{NBN09}, and \cite{CZ25}.

In our previous work \cite{CZ25}, we systematically developed a cohomology theory for a finite-dimensional 
left-symmetric color algebra $A$ (graded by an abelian group $G$ and with respect to a skew-symmetric bicharacter $\upepsilon$ of $G$) and its bimodule $V$ over a field. In particular, we proved that the cohomology space $H^{n+1}(A,V)$
can be computed by the $n$-th Lie cohomology of $[A]$ in coefficients $\Hom(A,V)$, where 
$[A]$ denotes the corresponding Lie color algebra; see \cite[Theorem 1.1]{CZ25} for details. 
This crucial result makes computing left-symmetric color cohomology feasible.
However, to better understand the theory of left-symmetric color cohomology, we need to 
develop a suitable deformation theory to interpret the left-symmetric color cohomology spaces of lower degrees. 

Roughly speaking, a formal deformation of a left-symmetric color algebra $A$ is a polynomial $F_\uplambda(x,y)$ in the variable $\uplambda$ for which the coefficients are taken from the second cochain subspace of degree zero, with the standard bimodule $A$.
The first important role the deformations of $A$ play is that it can be used to construct new left-symmetric color algebras on
the same underlying graded space, which is greatly helpful in classifying related algebraic structures on a fixed vector space; see Example \ref{exam307} below, where we capitalize on the first order deformation of a specific 3-dimensional complex left-symmetric superalgebra to find all 3-dimensional complex simple left-symmetric superalgebras.

Determining whether two deformations are equivalent is one of core topics in any deformation theory. In particular,  given a left-symmetric color algebra $A$ and its standard bimodule $A$, the corresponding 2-coboundaries of degree 0 can be used to
measure the equivalence of two infinitesimal deformations (i.e., first-order deformations) of $A$; see Example \ref{exam3.9}.
As a direct consequence, we see that the second cohomology subspace $H^2_0(A,A)$ of degree 0  actually describes the equivalence classes of  infinitesimal deformations of $A$.

The second interesting topic in a deformation theory is to decide when an infinitesimal deformation can be extended nontrivially.
The following result demonstrates that the third cohomology subspace $H^3_0(A,A)$ of degree 0 can be understood as an obstruction to the extendability of infinitesimal deformations, generalizing a result due to Dzhumadil'daev in \cite[Corollary 4.2]{Dzh99}.

\begin{thm}\label{mt}
Let $A$ be finite-dimensional left-symmetric color algebra over a field $k$ and let $K$ be the field of fractions of the formal power series ring $k[[\uplambda]]$. Suppose that $A_K$ denotes the left-symmetric color algebra over $K$ induced by the tensor product $A\otimes_k K$. If $H^3_0(A_K,A_K)=0$, then every infinitesimal deformation is nontrivially $p$-extendable for all $p\geqslant 2$.
\end{thm}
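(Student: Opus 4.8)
The plan is to run the classical order-by-order obstruction argument of Gerstenhaber type inside the left-symmetric color cochain complex constructed in \cite{CZ24}, the field $K$ entering only to phrase the hypothesis: by flat base change along the field extension $k\subseteq K$ together with the finite dimensionality of $A$, the condition $H^3_0(A_K,A_K)=0$ is equivalent to $H^3_0(A,A)=0$. First I would record the deformation equations. Denoting the given product by $\mu_0$ and a candidate $p$-deformation by $F_\uplambda=\mu_0+\sum_{i=1}^{p}\uplambda^i\mu_i$, with each $\mu_i$ a degree-$0$ $2$-cochain, substitution into the left-symmetric color identity and comparison of the coefficient of $\uplambda^n$ for $1\le n\le p$ yields a relation of the form $\delta_0\mu_n=\Theta_n(\mu_1,\dots,\mu_{n-1})$, where $\delta_0$ is the degree-$0$ coboundary operator and $\Theta_n$ is an explicit $\upepsilon$-graded quadratic expression in the lower-order terms, assembled from the natural bilinear operations $\bullet$ on cochains that govern the deformation equations. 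In particular $\Theta_1=0$, so a $1$-deformation is precisely an infinitesimal deformation $\mu_0+\uplambda\mu_1$ with $\delta_0\mu_1=0$, while $\Theta_2$ equals, up to sign and a scalar, the self-composition $\mu_1\bullet\mu_1$.

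The crux is an obstruction lemma: whenever $F_\uplambda=\mu_0+\sum_{i=1}^{p}\uplambda^i\mu_i$ is an honest $p$-deformation, the next term $\Theta_{p+1}(\mu_1,\dots,\mu_p)$ is a degree-$0$ $3$-cocycle, that is, $\delta_0\Theta_{p+1}=0$. I would establish this by expanding $\delta_0\Theta_{p+1}$ through the graded Jacobi-type identity relating $\delta_0$ to the operations $\bullet$ on cochains, and cancelling terms against the already-established lower-order equations $\delta_0\mu_n=\Theta_n$, $n\le p$; the Lie color translation $H^{n+1}(A,V)\cong H^n([A],\Hom(A,V))$ of \cite[Theorem 1.1]{CZ24} can be used to import the Chevalley--Eilenberg form of this computation. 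I expect this to be the main obstacle --- routine in spirit, as in the associative and Lie cases and in Dzhumadil'daev's \cite{Dzh99} treatment of right-symmetric algebras, but the cochain operations of left-symmetric color algebras are more intricate than their Lie analogues and each reassociation drags along values of the bicharacter $\upepsilon$, so the sign bookkeeping has to be carried out with care.

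Granting the lemma, the conclusion follows by induction on $p\ge 2$. For $p=2$: since $\delta_0\mu_1=0$, the term $\Theta_2$ is a degree-$0$ $3$-cocycle, hence, as $H^3_0(A,A)=0$, a degree-$0$ $3$-coboundary $\Theta_2=\pm\,\delta_0\mu_2$, and $\mu_0+\uplambda\mu_1+\uplambda^2\mu_2$ is a $2$-deformation extending the given infinitesimal one. Inductively, from a $q$-deformation extending $\mu_0+\uplambda\mu_1$ the lemma makes $\Theta_{q+1}$ a degree-$0$ $3$-cocycle, the vanishing of $H^3_0(A,A)$ makes it $\delta_0\mu_{q+1}$ for some degree-$0$ $2$-cochain $\mu_{q+1}$, and adjoining $\uplambda^{q+1}\mu_{q+1}$ produces a $(q+1)$-deformation; iterating reaches a $p$-deformation for any prescribed $p\ge 2$ (and, in the limit, a formal deformation). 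Finally, for the nontriviality clause: the point is that the higher corrections $\uplambda^2\mu_2,\dots,\uplambda^p\mu_p$ are exactly what make the extension exist --- generically $\mu_1\bullet\mu_1\ne 0$, so $\mu_0+\uplambda\mu_1$ by itself fails the identity past first order --- so the extension is not the trivial, all-higher-terms-zero one; moreover, when the infinitesimal deformation is nontrivial, i.e.\ $[\mu_1]\ne 0$ in $H^2_0(A,A)$, no extension of it can be equivalent to the undeformed algebra, since comparing coefficients of $\uplambda$ in an equivalence $\phi_\uplambda=\mathrm{id}+\uplambda\phi_1+\cdots$ would force $\mu_1=\delta_0\phi_1$, against the description of equivalence of infinitesimal deformations via degree-$0$ $2$-coboundaries in Example \ref{exam3.9}.
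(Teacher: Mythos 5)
Your overall strategy coincides with the paper's: derive the order-by-order deformation equations, show that the obstruction $\sum_{i=1}^{p-1} f_i\ast f_{p-i}$ to passing from order $p-1$ to order $p$ is a degree-$0$ $3$-cocycle, and then use the vanishing of $H^3_0$ to write it as $-d_2(f_p)$ and induct --- this is exactly the content of Lemma \ref{lem33-1}, Lemma \ref{lem33-2}, and the printed proof. Two concrete divergences, however. First, you propose to prove the obstruction lemma by the direct Gerstenhaber-type computation --- expanding $d_3$ of the obstruction through a graded Jacobi-type identity and cancelling against the lower-order equations --- and you explicitly leave this, the heart of the matter, unexecuted (``the sign bookkeeping has to be carried out with care''). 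That is the one substantive gap in your write-up. The paper's Lemma \ref{lem33-1} reaches the same conclusion by a much shorter route: it extends the order-$(p-1)$ deformation by $f_p=0$ so that the obstruction is exhibited as an element of the image of $-d_2$, hence lies in $\ker d_3$; whether that shortcut is fully watertight is a separate question, but it is the paper's argument, and your computational route, if carried out, would be the more robust one. Your preliminary base change reducing $H^3_0(A_K,A_K)=0$ to $H^3_0(A,A)=0$ is an addition the paper does not make; it is harmless and in fact helps with the requirement that the cochains $f_i$ be defined over $k$, which the printed proof passes over.

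Second, your treatment of the nontriviality clause does not establish what the theorem asserts. In the paper, ``nontrivially $p$-extendable'' means precisely that not all of $f_2,\dots,f_p$ vanish; it is not the assertion that the extended deformation is inequivalent to the undeformed algebra, so your closing appeal to Example \ref{exam3.9} answers a different question, and ``generically $f_1\ast f_1\neq 0$'' is not an argument for any particular $A$ (indeed, under the paper's conventions Lemma \ref{lem33-2} shows $f_1\ast f_1=0$ always, so the order-$2$ obstruction never forces $f_2\neq 0$). The paper's device is elementary: since the order-$2$ obstruction vanishes, one may choose $f_2$ to be \emph{any} nonzero element of $Z^2_0(A_K,A_K)$ --- nonempty because $f_1$ itself is a nonzero $2$-cocycle --- and this nonzero term then persists through every later stage of the induction, as recorded in (\ref{eid}) and Remark \ref{rem3.6}. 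You need this choice, or some substitute for it, to get the word ``nontrivially'' into your conclusion.
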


More importantly, we will provide  a deeper understanding of the equivalence class of the trivial infinitesimal deformation of a left-symmetric color algebra $A$. In fact, deciding whether an infinitesimal deformation is equivalent to the trivial infinitesimal deformation allows us to introduce and study the notions of two families of linear operators on $A$: \textit{Nijenhuis operators} and  \textit{Rota-Baxter operators} of any weight, which also generalize the corresponding concepts in left-symmetric algebras and left-symmetric superalgebras. Recently, these two types of operators have gained significant popularity in the fields of nonassociative algebras and their applications in mathematical physics; see for example, \cite{PBG14, LSZB16,  TBGS23, RZ24}, and \cite{BGZ25}.

\vspace{2mm}
\noindent \textbf{Layout.}  This article is organized as follows. In Section \ref{sec2}, we recall some fundamental concepts and facts about left-symmetric color cohomology that was established in \cite{CZ25} originally.  In particular, we discuss 
extensions of the ground fields carefully, providing many basic constructions for left-symmetric color algebras and linear maps. 

As the core part of the article, Sections \ref{sec3} consists of three subsections. Firstly, we explore fundamental properties of 
deformations and infinitesimal deformations of a left-symmetric color algebra; see Proposition \ref{prop301} and Corollary \ref{coro305}.  Secondly, we study equivalent deformations and use them to interpret the second cohomology subspace of degree zero in Example \ref{exam3.9} and Remark \ref{rem310}. The third subsection is devoted to the extendability of infinitesimal deformations and a proof of Theorem \ref{mt}. 

In Section \ref{sec4}, we study Nijenhuis operators and  Rota-Baxter operators on a left-symmetric color algebra, generalizing some interesting  results in \cite{WSBL19}.  Moreover, we also explicitly compute the varieties of Nijenhuis operators and Rota-Baxter operators on a two-dimensional proper left-symmetric color algebra over the complex field $\C$.

\vspace{2mm}
\noindent \textbf{Conventions.} 
Throughout this article, we assume that $k$ denotes a field of any characteristic, $K$ a field extension of $k$ (of not necessary finite degree), and $G$ denotes an abelian group. We write $B_k(G)$ for the set of  all skew-symmetric bicharacters of $G$ over $k$; see Section \ref{sec2} below for the details. In Section \ref{sec3} and subsequent sections, we write $K$ for the field of fractions of the formal power series ring $k[[\uplambda]]$ in a new variable $\uplambda$. We use some standard notations: $k^\times$ denotes the subset of all nonzero elements in the field $k$, $\N^+$ denotes the set of all positive integers, $\N=\N^+\cup \{0\}$, and $\C$ denotes the complex field.

\section{Cohomology of Left-symmetric Color Algebras} \label{sec2}
\setcounter{equation}{0}
\renewcommand{\theequation}
{2.\arabic{equation}}
\setcounter{theorem}{0}
\renewcommand{\thetheorem}
{2.\arabic{theorem}}

\noindent Let us begin by recalling that a map $\upepsilon:G\times G\ra k^\times$
is called a \textit{bicharacter} of $G$ over $k$ if it is biadditive, i.e., $\upepsilon(a,b+c)=\upepsilon(a,b)\cdot \upepsilon(a,c)$ and $\upepsilon(a+b,c)=\upepsilon(a,c)\cdot \upepsilon(b,c)$ for all $a,b,c\in G$.  A bicharacter $\upepsilon$ of $G$ is said to be \textit{skew-symmetric} if  $\upepsilon(a,b)\cdot\upepsilon (b,a)=1$ for all $a,b\in G$. 

\subsection{Cohomology spaces}
This subsection recaps some fundamental concepts on left-symmetric color cohomology developed in \cite[Section 3]{CZ25}. A $G$-graded nonassociative algebra $A=\bigoplus_{a\in G}A_a$ is called a \textit{left-symmetric color algebra} over a field $k$ if there exists a skew-symmetric bicharacter $\upepsilon\in B_k(G)$ such that 
\begin{equation}
\label{lsa}
(xy)z-x(yz)=\upepsilon(|x|,|y|)\Big((yx)z-y(xz)\Big)
\end{equation}
for all homogeneous elements $x,y$, and $z\in A$.  For a finite-dimensional  $A$-bimodule $V$ over $k$,  the following $G$-graded space
\begin{equation}
\label{ }
C^n(A,V):=\Hom((\land^{n-1}_\upepsilon A)\otimes A,V)=\bigoplus_{c\in G}\Hom_c\left((\land^{n-1}_\upepsilon A)\otimes A,V\right)
\end{equation}
denotes  the space of the $n$-th cochains, where $\land^{n-1}_\upepsilon A$ denotes the $(n-1)$-th $\upepsilon$-exterior power of $A$.

The  \textit{$n$-th coboundary operator} $d_n:C^n(A,V)\ra C^{n+1}(A,V)$ is defined by
\begin{eqnarray*}
\label{ }
(d_nf)(x_1,\dots,x_{n+1})&:=&\sum_{i=1}^n(-1)^{i+1}\upepsilon\left(|f|+\sum_{j=1}^{i-1}|x_j|,|x_i|\right)x_i f(x_1,\dots,\ha{x_i},\dots,x_{n+1})\\
&&\hspace{-0.4cm}+\sum_{i=1}^n(-1)^{i+1}\upepsilon\left(|x_i|, \sum_{j=i+1}^n|x_j|\right)f(x_1,\dots,\ha{x_i},\dots,x_n,x_i)x_{n+1}\\
&&\hspace{-0.4cm}-\sum_{i=1}^n(-1)^{i+1}\upepsilon\left(|x_i|, \sum_{j=i+1}^n|x_j|\right)f(x_1,\dots,\ha{x_i},\dots,x_n,x_ix_{n+1})\\
&&\hspace{-0.4cm}+\sum_{1\leqslant j<i\leqslant n}(-1)^{i+1}\upepsilon\left(\sum_{s=j+1}^{i-1}|x_s|,|x_i|\right)f(x_1,\dots,[x_j,x_i],x_{j+1},\dots,\ha{x_i},\dots,x_{n+1})
\end{eqnarray*}
for all homogeneous elements $f\in C^n(A,V)$ and $x_1,\dots,x_{n+1}\in A$.  The space of \textit{$n$-th cocycles} is
\begin{equation}
\label{ }
Z^n(A,V)=\{f\in C^n(A,V)\mid d_n(f)=0\}
\end{equation}
and  the \textit{$n$-th coboundary} space $B^n(A,V)$ is defined as the image of $d_{n-1}$ in $C^{n}(A,V)$.  

For all $n\geqslant 1$, the key fact that $d_n\circ d_{n-1}=0$ allows us to define the quotient space
\begin{equation}
\label{sim}
H^n(A,V)=\frac{Z^{n}(A,V)}{B^{n}(A,V)}
\end{equation}
as the  \textit{$n$-th cohomology space} of $A$ with coefficients in $V$,  which is also a $G$-graded vector space with 
the $c$-component $Z^n_c(A,V)/B^{n}_c(A,V)$ for all $c\in G$; see \cite[Section 3.2]{CZ25} for more details.

\subsection{Extension of scalars} 

Note that each $\upepsilon\in B_k(G)$ can be regarded as a skew-symmetric bicharacter of $G$ over $K$ induced by the natural field injection from $k$ to $K$. Thus we may identify $B_k(G)$ with a subset of $B_K(G)$. We say that a skew-symmetric bicharacter $\upepsilon\in B_K(G)$ is \textit{defined over $k$} if $\upepsilon(a,b)\in k$ for all $a,b\in G$.

Let $V=\oplus_{g\in G}V_a$ be any $G$-graded vector space over $k$. Then the tensor product $V_K:=V\otimes_k K$, as a $K$-vector space, is also a $G$-graded vector space defined by
\begin{equation}
\label{ }
(V_K)_a:=V_a\otimes_k K
\end{equation}
for all $a\in G$. Note that $\dim_k(V)=\dim_K(V_K)$.

Suppose that $W$ denotes another  $G$-graded vector space over $k$. We may extend  a $k$-linear map $f:V\ra W$ to a $K$-linear map $f_K: V_K\ra W_K$ by
\begin{equation}
\label{ext}
f_K(v\otimes r):=f(v)\otimes r
\end{equation}
for all $v\in V$ and $r\in K$. Conversely, a $K$-linear map $F: V_K\ra W_K$ is \textit{defined over $k$} if there exists a $k$-linear map $f:V\ra W$ such that $F=f_K$. 

Moreover, a $k$-bilinear map $f:V\times V\ra W$ also can be extended to a $K$-bilinear map $f_K: V_K\times V_K\ra W_K$  by
\begin{equation}
\label{ext}
f_K(v\otimes r,w\otimes s):=f(v,w)\otimes rs
\end{equation}
for all $v,w\in V$ and $r,s\in K$. Conversely, a $K$-bilinear map $F: V_K\times V_K\ra W_K$
is \textit{defined over $k$} if there exists a $k$-bilinear map $f:V\times V\ra W$ such that
$F=f_K$.

The following result shows that a left-symmetric color algebra over $k$ can be extended as a 
left-symmetric color algebra over $K$.

\begin{prop} 
Let $(A,\mu)$ be a left-symmetric color algebra over $k$ with respect to $\upepsilon\in B_k(G)$. Then
$(A_K,\mu_K)$ is a left-symmetric color algebra over $K$ with respect to the same $\upepsilon$.
\end{prop}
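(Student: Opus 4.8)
\noindent The plan is to verify the left-symmetric color identity \eqref{lsa} for $(A_K,\upmu_K)$ by reducing it, through multilinearity, to the identity already known for $(A,\upmu)$. First I would collect the structural facts recorded above: $A_K=A\otimes_k K$ is a $G$-graded $K$-vector space with $(A_K)_a=A_a\otimes_k K$ and $\dim_K(A_K)=\dim_k(A)<\infty$; the product $\upmu_K\colon A_K\times A_K\ra A_K$ is the $K$-bilinear extension of $\upmu$, characterised by $\upmu_K(v\otimes r,w\otimes s)=\upmu(v,w)\otimes rs$; and $\upepsilon\in B_k(G)$ is viewed inside $B_K(G)$, where it remains a skew-symmetric bicharacter of $G$.

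\noindent Next I would check that $\upmu_K$ respects the $G$-grading. If $v\in A_a$ and $w\in A_b$, then $\upmu(v,w)\in A_{a+b}$, so $\upmu_K(v\otimes r,w\otimes s)=\upmu(v,w)\otimes rs\in A_{a+b}\otimes_k K=(A_K)_{a+b}$; since $(A_K)_a$ is spanned over $K$ by elementary tensors $v\otimes r$ with $v\in A_a$, bilinearity yields $\upmu_K\big((A_K)_a\times(A_K)_b\big)\subseteq(A_K)_{a+b}$. In particular, a homogeneous element of $A_K$ of degree $a$ is a $K$-linear combination of elementary tensors $v\otimes r$ with $v$ homogeneous of degree $a$ in $A$.

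\noindent For the color identity itself, I would observe that both sides of \eqref{lsa}, with juxtaposition now read as $\upmu_K$, are $K$-trilinear in $(x,y,z)$ on $(A_K)_a\times(A_K)_b\times(A_K)_c$, where the coefficient $\upepsilon(|x|,|y|)=\upepsilon(a,b)$ is a fixed scalar; hence it suffices to verify \eqref{lsa} for $x=v\otimes r$, $y=w\otimes s$, $z=u\otimes t$ with $v\in A_a$, $w\in A_b$, $u\in A_c$. Unwinding the definition of $\upmu_K$ and using commutativity of $K$ gives $(xy)z=\upmu(\upmu(v,w),u)\otimes rst$, $x(yz)=\upmu(v,\upmu(w,u))\otimes rst$, and likewise $(yx)z=\upmu(\upmu(w,v),u)\otimes rst$ and $y(xz)=\upmu(w,\upmu(v,u))\otimes rst$, so
\begin{align*}
&(xy)z-x(yz)-\upepsilon(|x|,|y|)\big((yx)z-y(xz)\big)\\
&\quad=\Big(\big(\upmu(\upmu(v,w),u)-\upmu(v,\upmu(w,u))\big)-\upepsilon(a,b)\big(\upmu(\upmu(w,v),u)-\upmu(w,\upmu(v,u))\big)\Big)\otimes rst.
\end{align*}
This vanishes because $(A,\upmu)$ satisfies \eqref{lsa} for the homogeneous elements $v,w,u$ and $\upepsilon(|v|,|w|)=\upepsilon(a,b)=\upepsilon(|x|,|y|)$, which proves the claim.

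\noindent I do not expect a genuine obstacle here: this is a routine base-change verification, and the only point requiring a little care is the reduction from arbitrary homogeneous elements of $A_K$ to elementary tensors supported in a single homogeneous component of $A$, which is exactly what the $K$-multilinearity of both sides of \eqref{lsa} supplies.
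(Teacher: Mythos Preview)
Your proposal is correct and follows essentially the same approach as the paper: both reduce the left-symmetric color identity on $A_K$ to elementary homogeneous tensors $v\otimes r$, $w\otimes s$, $u\otimes t$, compute each of the four products as $(\text{something in }A)\otimes rst$, and then invoke the identity in $(A,\upmu)$ together with $K$-multilinearity. Your version is slightly more explicit about why $\upmu_K$ respects the grading and why the reduction to elementary tensors suffices, but there is no substantive difference in method.
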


\begin{proof}
Suppose that $x\otimes r, y\otimes s$, and $z\otimes t$ are arbitrary homogeneous elements in $A_K$ for $x,y,z\in A$ and $r,s,t\in K$. It follows from (\ref{ext}) that 
$$\mu_K(\mu_K(x\otimes r,y\otimes s),z\otimes t)=\mu_K(\mu(x,y)\otimes rs),z\otimes t)=\mu(\mu(x,y),z)\otimes rst.$$ Similarly, we see that $\mu_K(\mu_K(y\otimes s,x\otimes r),z\otimes t)=\mu(\mu(y,x),z)\otimes srt=\mu(\mu(y,x),z)\otimes rst$, $\mu_K(x\otimes r,\mu_K(y\otimes s,z\otimes t))=\mu(x,\mu(y,z))\otimes rst$, and $\mu_K(y\otimes s,\mu_K(x\otimes r,z\otimes t))=\mu(y,\mu(x,z))\otimes rst.$ Note that $|x\otimes r|=|x|, |y\otimes s|=|y|, |z\otimes t|=|z|.$  Hence,
\begin{equation} \label{ext2}
\begin{aligned}
&\mu_K(\mu_K(x\otimes r,y\otimes s),z\otimes t)-\mu_K(x\otimes r,\mu_K(y\otimes s,z\otimes t))-\\
&\upepsilon(|x\otimes r|,|y\otimes s|)\Big(\mu_K(\mu_K(y\otimes s,x\otimes r),z\otimes t)-\mu_K(y\otimes s,\mu_K(x\otimes r,z\otimes t))\Big)\\
=&~\Big(\mu(\mu(x,y),z)-\mu(x,\mu(y,z))-\upepsilon(|x|,|y|) \big(\mu(\mu(y,x),z)-\mu(y,\mu(x,z))\big)\Big)\otimes rst\\
=&~0\otimes rst=0
\end{aligned}
\end{equation}
because $(A,\mu)$ is a left-symmetric color algebra  with respect to $\upepsilon$. As $\mu_K: A_K\otimes_K A_K\ra A_K$ is linear, we see that the left-symmetric color identity (\ref{lsa}) follows for any homogeneous elements of $A_K$. Hence, $(A_K,\mu_K)$ is a left-symmetric color algebra  with respect to the same $\upepsilon$.
\end{proof}

We say that a left-symmetric color algebra $(A_K,\nu)$ with respect to $\upepsilon\in B_K(G)$ is \textit{defined over $k$}
if both $\upepsilon$ and $\nu$ are defined over $k$.

\begin{rem}{\rm
Suppose that $(A_K,\nu)$ is a left-symmetric color algebra defined over $k$ and we may assume $\nu=\mu_K$. Then
$(A,\mu)$ is also a left-symmetric color algebra over $k$ with respect to the same skew-symmetric bicharacter. 
To see that, setting $r=s=t=1$ in (\ref{ext2}) obtains
$$\Big(\mu(\mu(x,y),z)-\mu(x,\mu(y,z))-\upepsilon(|x|,|y|) \big(\mu(\mu(y,x),z)-\mu(y,\mu(x,z))\big)\Big)\otimes 1=0.$$
Since $A\cong A\otimes_kk$ as $k$-vector spaces,  the map $\uppi:A\ra A\otimes_kk$ defined by $v\mapsto v\otimes 1$ is injective. Thus $A_\textrm{tor}=\ker(\uppi)=\{0\}$. Note that for all $v\in A$ and $a\in k^\times$, $v\otimes a=0$ if and only if $v\in A_\textrm{tor}$. Applying this fact, we have
$$\mu(\mu(x,y),z)-\mu(x,\mu(y,z))-\upepsilon(|x|,|y|) \big(\mu(\mu(y,x),z)-\mu(y,\mu(x,z))\big)=0$$
which shows that $(A,\mu)$ is a left-symmetric color algebra over $k$.
\hbo}\end{rem}

\section{Deformations of Left-symmetric Color Algebras} \label{sec3}
\setcounter{equation}{0}
\renewcommand{\theequation}
{3.\arabic{equation}}
\setcounter{theorem}{0}
\renewcommand{\thetheorem}
{3.\arabic{theorem}}

\noindent This core section is to develop a deformation theory for left-symmetric color algebras. We focus on equivalent  
deformations and use the equivalence classes of infinitesimal deformations to interpret the second cohomology space of degree 0. In particular, we study the extendability of infinitesimal deformations and give a proof to Theorem \ref{mt}.

\subsection{One-parameter family of deformations}
Let $(A,\mu)$ be a finite-dimensional left-symmetric color algebra over $k$ with respect to $\upepsilon\in B_k(G)$ and $K$ be the field of fractions of the formal power series ring $k[[\uplambda]]$ in the variable $\uplambda$.

A generic element in the \textit{one-parameter family of deformations} of $A$ is a left-symmetric color algebraic structure $F_\uplambda$ on $A_K$ expressed by the following form
\begin{equation}
\label{opfd}
F_\uplambda(x,y)=f_0(x,y)+\uplambda\cdot f_1(x,y)+\uplambda^2\cdot f_2(x,y)+\cdots
\end{equation}
where $f_0(x,y):=\mu_K(x,y)$ and every $f_i(x,y):A_K\otimes_K A_K\ra A_K$  is a homogeneous linear map of degree 0 defined over $k$. Thus all $f_i\in C^2_0(A_K,A_K)$ are second cochains of degree 0 for $i\in\N^+$. 

We always assume that there exists some $m\in\N^+$ such that $f_i=0$ for all $i>m$, i.e.,  the number of nonzero 
terms of $F_\uplambda(x,y)$ is finite. 

\begin{prop} \label{prop301}
Let $(A_K,F_\uplambda)$ be a left-symmetric color algebra over $K$ with respect to $\upepsilon\in B_K(G)$. Then
for all $p\in\N$, we have
\begin{equation}
\label{lsci2}
\sum_{i,j\geqslant 0}^{i+j=p} \Big(f_i(f_j(x,y),z)-f_i(x,f_j(y,z))-\upepsilon(|x|,|y|)\big(f_i(f_j(y,x),z)-f_i(y,f_j(x,z))\big)\Big)=0
\end{equation}
for arbitrary homogeneous elements $x,y,z\in A_K$.
\end{prop}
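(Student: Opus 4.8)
The plan is to expand the left-symmetric color identity \eqref{lsa} for the deformed multiplication $F_\uplambda$ and collect coefficients of $\uplambda^p$. First I would substitute $F_\uplambda(x,y)=\sum_{i\geqslant 0}\uplambda^i f_i(x,y)$ into the defining identity for $(A_K,F_\uplambda)$, namely
\[
F_\uplambda(F_\uplambda(x,y),z)-F_\uplambda(x,F_\uplambda(y,z))=\upepsilon(|x|,|y|)\Big(F_\uplambda(F_\uplambda(y,x),z)-F_\uplambda(y,F_\uplambda(x,z))\Big).
\]
Using $K$-bilinearity of each $F_\uplambda(-,-)$ in the deformation parameter, a term such as $F_\uplambda(F_\uplambda(x,y),z)$ becomes $\sum_{i,j\geqslant 0}\uplambda^{i+j} f_i(f_j(x,y),z)$, and similarly for the other three terms; here one uses that every $f_i$ is defined over $k$ and degree $0$, so $\upepsilon$ may be pulled out unchanged and the degrees $|x|,|y|,|z|$ are unaffected (this is exactly the computation already carried out in the proof of the preceding Proposition and in \eqref{ext2}).

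Next I would note that \eqref{lsa} for $(A_K,F_\uplambda)$ is an identity in $K=\operatorname{Frac}(k[[\uplambda]])$, and that after the substitution above the left-hand side minus the right-hand side is a formal power series in $\uplambda$ (in fact a polynomial, since only finitely many $f_i$ are nonzero) with coefficients in $A_K$, namely
\[
\sum_{p\geqslant 0}\uplambda^p\sum_{\substack{i,j\geqslant 0\\ i+j=p}}\Big(f_i(f_j(x,y),z)-f_i(x,f_j(y,z))-\upepsilon(|x|,|y|)\big(f_i(f_j(y,x),z)-f_i(y,f_j(x,z))\big)\Big).
\]
Since this power series is identically zero and the monomials $\uplambda^p$ are $K$-linearly independent over the coefficient subspace (equivalently, a nonzero formal power series in $\uplambda$ is a unit and hence not zero), each coefficient must vanish, which gives precisely \eqref{lsci2} for every $p\in\N$ and all homogeneous $x,y,z\in A_K$. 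Because the deformed identity holds for all elements of $A_K$ by bilinearity, it suffices to have verified it on homogeneous triples, which is all that \eqref{lsci2} asserts.

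I do not expect any serious obstacle here; the statement is essentially a bookkeeping unwinding of the definition of a one-parameter deformation, of the kind standard since Gerstenhaber. The only point requiring a little care is the justification that vanishing of the total power series forces the vanishing of each $\uplambda^p$-coefficient \emph{inside the coefficient space $C^2_0(A_K,A_K)$}: one should observe that the relevant expressions lie in $A\otimes_k k[[\uplambda]]\subseteq A_K$ after clearing denominators, so this reduces to the elementary fact that $A\otimes_k k[[\uplambda]]=A[[\uplambda]]$ has no $\uplambda$-torsion and its elements are uniquely determined by their coefficients — the same torsion-freeness argument used in the Remark following the previous Proposition. With that in hand the proof is complete.
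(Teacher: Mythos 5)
Your argument is correct and follows exactly the paper's route: expand $F_\uplambda$ in the left-symmetric color identity, use bilinearity to rewrite each of the four terms as a power series in $\uplambda$, and conclude by comparing coefficients of $\uplambda^p$ (the paper compresses your torsion-freeness justification into the remark that $\uplambda$ is an indeterminate). No issues.
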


\begin{proof}
By the left-symmetric color identity (\ref{lsa}), we see that
\begin{equation}
\label{lsci}
F_\uplambda(F_\uplambda(x,y),z)- F_\uplambda(x,F_\uplambda(y,z))=\upepsilon(|x|,|y|)\Big(F_\uplambda(F_\uplambda(y,x),z)-F_\uplambda(y,F_\uplambda(x,z))\Big).
\end{equation}
It follows from (\ref{opfd}) that 
\begin{eqnarray*}
F_\uplambda(F_\uplambda(x,y),z) & = & F_\uplambda\left(\sum_{j\geqslant 0} \uplambda^j\cdot f_j(x,y),z\right) = \sum_{j\geqslant 0} F_\uplambda\left(\uplambda^j\cdot f_j(x,y),z\right) \\
 &= & \sum_{j\geqslant 0} \sum_{i\geqslant 0} \uplambda^i\cdot f_i\Big(\uplambda^j\cdot f_j(x,y),z\Big)\\
 &=&\sum_{p\geqslant 0}\sum_{i,j\geqslant 0}^{i+j=p} \uplambda^p\cdot f_i\Big(f_j(x,y),z\Big)\\
 &=&\sum_{p\geqslant 0} \uplambda^p\left(\sum_{i,j\geqslant 0}^{i+j=p} f_i(f_j(x,y),z)\right).
\end{eqnarray*}
Similarly, 
\begin{eqnarray*}
F_\uplambda(x,F_\uplambda(y,z))&=& \sum_{p\geqslant 0} \uplambda^p\left(\sum_{i,j\geqslant 0}^{i+j=p} f_i(x,f_j(y,z))\right)\\
F_\uplambda(F_\uplambda(x,y),z)&=& \sum_{p\geqslant 0} \uplambda^p\left(\sum_{i,j\geqslant 0}^{i+j=p} f_i(f_j(y,x),z)\right)\\
F_\uplambda(y,F_\uplambda(x,z))&=& \sum_{p\geqslant 0} \uplambda^p\left(\sum_{i,j\geqslant 0}^{i+j=p} f_i(y,f_j(x,z))\right).
\end{eqnarray*}
Note that $\uplambda$ is an indeterminate. Substituting these equations together into 
(\ref{lsci}) and simplifying it, we see that (\ref{lsci2}) follows for all $p\in\N$.
\end{proof}

\begin{rem}{\rm
For $p\in\N$, we write $F_{\uplambda,p}(x,y)$ for the sum of the first $p+1$ terms of $F_{\uplambda}(x,y)$ and call it 
a \textit{deformation of $(A,\mu)$ of order $p$}. For instance, $F_{\uplambda,0}(x,y)=f_0(x,y)=\mu_K(x,y)$, and $F_{\uplambda,1}(x,y)=f_0(x,y)+\uplambda\cdot f_1(x,y)$, which is also called an \textit{infinitesimal deformation} of $(A,\mu)$.  
\hbo}\end{rem}

Writing $xy$ for $\mu_K(x,y)$ and taking $p=1$ in (\ref{lsci2})  imply the following result, which may give an interpretation of
2-cocycles of degree 0 in the left-symmetric color cohomology developed in \cite[Section 3.2]{CZ25}. 

\begin{coro} \label{coro3.3}
For all homogeneous elements $x,y,z\in A_K$, we have
$$f_1(xy,z)-f_1(x,yz)+f_1(x,y)z-xf_1(y,z)=\upepsilon(|x|,|y|)\Big(f_1(yx,z)-f_1(y,xz)+f_1(y,x)z-yf_1(x,z)\Big).$$
\end{coro}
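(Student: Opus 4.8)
The plan is to read off Corollary \ref{coro3.3} as the special case $p=1$ of the master equation \eqref{lsci2} from Proposition \ref{prop301}, rewritten in multiplicative notation. First I would expand the left-hand side of \eqref{lsci2} for $p=1$: the index set $\{(i,j):i,j\geqslant 0,\ i+j=1\}$ has exactly two elements, $(i,j)=(1,0)$ and $(i,j)=(0,1)$. The $(1,0)$ contribution, using $f_0=\upmu_K$ and the convention $xy:=\upmu_K(x,y)$, is
\begin{equation}
\label{ }
f_1(xy,z)-f_1(x,yz)-\upepsilon(|x|,|y|)\big(f_1(yx,z)-f_1(y,xz)\big),
\end{equation}
and the $(0,1)$ contribution is
\begin{equation}
\label{ }
f_1(x,y)z-xf_1(y,z)-\upepsilon(|x|,|y|)\big(f_1(y,x)z-yf_1(x,z)\big).
\end{equation}
Here I am again writing $uv$ for $\upmu_K(u,v)=f_0(u,v)$, so that $f_0(f_1(x,y),z)=f_1(x,y)z$ and $f_0(x,f_1(y,z))=xf_1(y,z)$, etc.

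Next I would add these two contributions, set the sum equal to zero as \eqref{lsci2} asserts, and transpose the terms carrying the bicharacter factor $\upepsilon(|x|,|y|)$ to the right-hand side. Grouping the four $f_1$-terms with argument pattern $(x,y)$ on the left and the four with pattern $(y,x)$ on the right yields precisely
\begin{equation}
\label{ }
f_1(xy,z)-f_1(x,yz)+f_1(x,y)z-xf_1(y,z)=\upepsilon(|x|,|y|)\Big(f_1(yx,z)-f_1(y,xz)+f_1(y,x)z-yf_1(x,z)\Big),
\end{equation}
which is the claimed identity. Since Proposition \ref{prop301} already guarantees \eqref{lsci2} holds for every $p\in\N$, and in particular for $p=1$, no further hypotheses are needed: the assumption that $(A_K,F_\uplambda)$ is a left-symmetric color algebra is exactly what was used to derive \eqref{lsci2}.

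There is essentially no obstacle here; the statement is a direct corollary, and the only thing to be careful about is bookkeeping — making sure the $p=1$ index set is exhausted by $(1,0)$ and $(0,1)$, that $f_0$ is consistently replaced by the juxtaposition product, and that the four terms land on the correct side of the equation after isolating the $\upepsilon(|x|,|y|)$-factor. I would present this as a two-line computation rather than belaboring it.
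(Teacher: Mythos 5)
Your proposal is correct and is exactly the paper's argument: the paper obtains Corollary \ref{coro3.3} by setting $p=1$ in \eqref{lsci2} and writing $xy$ for $\upmu_K(x,y)=f_0(x,y)$, which is precisely your expansion over the index pairs $(1,0)$ and $(0,1)$ followed by moving the $\upepsilon(|x|,|y|)$-terms to the right-hand side. The bookkeeping in your two contributions checks out, so nothing further is needed.
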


\begin{rem}{\rm
This corollary shows that $f_1$ must be a 2-cocycle of degree 0 in $Z^2(A_K,A_K)$ with coefficients in the standard bimodule $A_K$; see \cite[Example 3.2]{CZ25}. 
\hbo}\end{rem}

More generally, for two 2-cochains $f$ and $g$ of degree 0, we define
\begin{equation}
\label{ }
(f\ast g)(x,y,z):=f(g(x,y),z)-f(x,g(y,z))-\upepsilon(|x|,|y|)\Big(f(g(y,x),z)-f(y,g(x,z))\Big)
\end{equation}
for homogeneous elements $x,y,z$.

\begin{coro} \label{coro305}
 If $F_{\uplambda,p}$ is a deformation of $A$ of order $p\geqslant 2$, then
\begin{equation}
\label{ }
\sum_{i=1}^{p-1} f_i\ast f_{p-i}=-d_2(f_p)
\end{equation}
where $d_2$ denotes the second coboundary operator in Section 2.1.
\end{coro}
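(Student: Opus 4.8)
The plan is to start from the order-$p$ deformation equation (\ref{lsci2}) taken at $p$ and isolate the terms involving $f_0$. Recall $f_0=\upmu_K$ is the original multiplication. In the sum $\sum_{i+j=p}$, the term $(i,j)=(0,p)$ contributes $f_0(f_p(x,y),z)-f_0(x,f_p(y,z))-\upepsilon(|x|,|y|)\bigl(f_0(f_p(y,x),z)-f_0(y,f_p(x,z))\bigr)$, i.e. $f_p(x,y)z - x f_p(y,z) - \upepsilon(|x|,|y|)\bigl(f_p(y,x)z - y f_p(x,z)\bigr)$ writing $ab$ for $\upmu_K(a,b)$; and the term $(i,j)=(p,0)$ contributes $f_p(xy,z)-f_p(x,yz)-\upepsilon(|x|,|y|)\bigl(f_p(yx,z)-f_p(y,xz)\bigr)$. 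All remaining terms have $1\leqslant i,j$, so they range over $i+j=p$ with $i$ from $1$ to $p-1$ and $j=p-i$ also between $1$ and $p-1$; that is precisely $\sum_{i=1}^{p-1} (f_i\ast f_{p-i})(x,y,z)$.

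Next I would verify that the sum of the $(0,p)$ and $(p,0)$ contributions equals $d_2(f_p)(x,y,z)$. For this I specialize the general coboundary formula for $d_n$ to $n=2$ with coefficients in the standard bimodule $V=A_K$, where the left and right actions are $a\cdot v = \upmu_K(a,v)$ and $v\cdot a = \upmu_K(v,a)$, and where $|f_p|=0$ so the $\upepsilon$ factors involving $|f|$ drop out. Unwinding $d_2 f_p$ on arguments $(x,y,z)$ produces exactly the four ``boundary'' terms $x f_p(y,z)$, $-f_p(x,y)z$, ... together with their $\upepsilon(|x|,|y|)$-twisted counterparts coming from the commutator term $[x,y]$ — this matches Corollary~\ref{coro3.3}, which is the $p=1$ instance and already identifies $d_2(f_1)=0$ with the stated cocycle condition. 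So up to an overall sign the $(0,p)$-plus-$(p,0)$ part of (\ref{lsci2}) is $-d_2(f_p)(x,y,z)$ (or $+d_2(f_p)$, depending on the sign convention fixed in Corollary~\ref{coro3.3}).

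Combining the two observations, equation (\ref{lsci2}) at level $p$ reads $d_2(f_p)(x,y,z) + \sum_{i=1}^{p-1}(f_i\ast f_{p-i})(x,y,z) = 0$ for all homogeneous $x,y,z$, which rearranges to $\sum_{i=1}^{p-1} f_i\ast f_{p-i} = -d_2(f_p)$, as claimed. Since $F_{\uplambda,p}$ is assumed to be a genuine deformation of order $p$ (hence a left-symmetric color algebra structure on $A_K$ modulo $\uplambda^{p+1}$), Proposition~\ref{prop301} applies and guarantees (\ref{lsci2}) holds at every level $0\leqslant p'\leqslant p$, in particular at $p'=p$; no convergence or higher-order issue arises because only finitely many $f_i$ are involved.

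The one genuinely delicate point — the ``main obstacle'' — is bookkeeping the signs and $\upepsilon$-factors when matching the $(0,p)$ and $(p,0)$ terms of the $\ast$-expansion against the explicit $d_2$ formula: one must be careful that the commutator-type contributions $-f_p([x,y],z)$-style terms hidden inside $\upepsilon(|x|,|y|)\bigl(f_p(yx,z)-\cdots\bigr)$ reassemble correctly into the last sum of the $d_2$ formula, and that the degree-$0$ hypothesis on $f_p$ really does kill every $\upepsilon(|f|,-)$ prefactor. Because the $p=1$ case is already recorded as Corollary~\ref{coro3.3} (with $d_2(f_1)=0$), I would simply check that the identical manipulation, now with the extra cross-terms $f_i\ast f_{p-i}$ ($1\leqslant i\leqslant p-1$) retained rather than discarded, goes through verbatim, and cite Corollary~\ref{coro3.3} as the template for the $f_0$-part computation.
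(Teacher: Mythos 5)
Your argument is correct and is essentially the paper's intended one: the paper states Corollary~\ref{coro305} without a separate proof, precisely because it follows by splitting the level-$p$ instance of (\ref{lsci2}) into the $(0,p)$ and $(p,0)$ terms (which, since $f_p$ has degree $0$, assemble into $d_2(f_p)$ exactly as in Corollary~\ref{coro3.3}) plus the middle terms $\sum_{i=1}^{p-1} f_i\ast f_{p-i}$. Your sign bookkeeping and the observation that only finitely many $f_i$ appear are both consistent with the paper's conventions.
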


\begin{rem}\label{rem3.6}
{\rm
Conversely, if $f_1,f_2,\dots,f_p$ (homogeneous linear maps from $A_K\otimes_K A_K$ to $A_K$ of degree zero defined over $k$)
satisfy (\ref{lsci2}), then $A_K$, together with $F_{\uplambda,p}(x,y)$, becomes a new left-symmetric color algebra over $K$. Clearly,
$F_{\uplambda,p}(x,y)$ is also defined over $k$ if $\uplambda$ is a scalar in $k$. Hence, restricting $A_K$ and $F_{\uplambda,p}(x,y)$ to $A$ may give us an approach to construct new left-symmetric color algebra structures on $A$. 
\hbo}\end{rem}

The following example demonstrates that each 3-dimensional complex simple left-symmetric superalgebras can be realized via a specific 3-dimensional complex left-symmetric superalgebra and its first order deformations.  

\begin{exam}\label{exam307}
{\rm
Let $A=A_0\oplus A_1$ be the 3-dimensional  left-symmetric superalgebra over $\C$ defined by
the following nonzero products:
$$xx=2x,~~ xy_1=y_1,~~ xy_2=y_2,~~ y_1y_2=x,~~ y_2y_1=-x$$
where $\{x\}$ spans $A_0$ and $\{y_1,y_2\}$ spans $A_1$. A direct computation shows that
$Z^2_0(A,A)$, the space of all 2-cocycles of degree 0, consists of all 2-cochains $f$ that have the following nonzero values:
\begin{equation}
\label{ }
f(x,x)=rx,~ f(x,y_1)=sy_2,~ f(x,y_2)=ty_1+ry_2
\end{equation}
where $r,s,t\in\C$. Hence, we may construct the following left-symmetric superalgebra structures on the underlying space of 
$A_K$ via using elements in $Z^2_0(A,A)$ and extensions to $A_K$:
\begin{eqnarray*}
A_\uplambda(t)&:& F_{\uplambda,1} (x,x)=(t+1)x,~  F_{\uplambda,1} (x,y_1)=y_1,~  F_{\uplambda,1} (x,y_2)=ty_2,~
F_{\uplambda,1} (y_1,y_2)=x,\\
&&  F_{\uplambda,1} (y_2,y_1)=-x,~\textrm{ where }t=e^{i\upalpha}\textrm{ and  }\upalpha\in[0,\uppi];\\
B_\uplambda&:&  F_{\uplambda,1} (x,x)=2x,~ F_{\uplambda,1} (x,y_1)=y_1,~
F_{\uplambda,1} (x,y_2)=y_1+y_2,~ F_{\uplambda,1} (y_1,y_2)=x,\\
&&F_{\uplambda,1} (y_2,y_1)=-x.
\end{eqnarray*}
The collection of the restrictions of the left-symmetric superalgebras $A_\uplambda(t)$ and $B_\uplambda$ to $A$ contains 
all the 3-dimensional complex simple left-symmetric superalgebras which actually have been classified by \cite[Sections 4 and 5]{ZB12}.
\hbo}\end{exam}

\subsection{Equivalent deformations}
We say that two left-symmetric color deformations $F_\uplambda$ and $E_\uplambda$ are \textit{equivalent}
if there exists a homogeneous linear map of the following form: 
$$P_\uplambda=p_0+\uplambda\cdot p_1+\uplambda^2\cdot p_2+\cdots$$
where $p_0$ denotes the identity map on $A_K$ and each $p_i\in C^1_0(A_K,A_K)=\Hom_0(A_K,A_K)$ is of degree 0 and defined over $k$, such that
\begin{equation}
\label{ed1}
F_\uplambda(P_\uplambda(x),P_\uplambda(y))=P_\uplambda(E_\uplambda(x,y))
\end{equation}
for all $x,y\in A_K$. Here we assume that $P_\uplambda$ has only finitely many nonzero terms.

\begin{prop}\label{prop3.8}
Let $F_\uplambda=\sum_{s=0}^\infty \uplambda^s\cdot f_s$ and $E_\uplambda=\sum_{j=0}^\infty \uplambda^j\cdot e_j$ be two 
left-symmetric color deformations. Then there exists a homogeneous linear map $P_\uplambda=\sum_{i=0}^\infty \uplambda^i\cdot p_i$  such that $F_\uplambda$ is equivalent to $E_\uplambda$ if and only if
\begin{equation}
\label{ed2}
\sum_{i,j,s\geqslant 0}^{i+j+s=p}f_s(p_i(x),p_j(y))=\sum_{i,j\geqslant 0}^{i+j=p}p_i(e_j(x,y))
\end{equation}
for all $x,y\in A_K$ and $p\in\N$.
\end{prop}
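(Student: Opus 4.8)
The plan is to prove Proposition \ref{prop3.8} by directly expanding the defining equation (\ref{ed1}) for equivalence and comparing coefficients of powers of the indeterminate $\uplambda$. The key observation is that everything in sight is a formal power series in $\uplambda$ with coefficients that are $k$-multilinear maps, so the equation (\ref{ed1}) holds in the formal sense if and only if it holds coefficient-by-coefficient; this is exactly the same bookkeeping already carried out in the proof of Proposition \ref{prop301}.

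First I would substitute $P_\uplambda = \sum_{i\geqslant 0}\uplambda^i\cdot p_i$ and $F_\uplambda = \sum_{s\geqslant 0}\uplambda^s\cdot f_s$ into the left-hand side $F_\uplambda(P_\uplambda(x),P_\uplambda(y))$. Using bilinearity of each $f_s$ over $K$ and the fact that $\uplambda$ is a central indeterminate, one gets
\[
F_\uplambda(P_\uplambda(x),P_\uplambda(y))=\sum_{s\geqslant 0}\sum_{i\geqslant 0}\sum_{j\geqslant 0}\uplambda^{s+i+j}\cdot f_s\bigl(p_i(x),p_j(y)\bigr)=\sum_{p\geqslant 0}\uplambda^p\left(\sum_{\substack{i,j,s\geqslant 0\\ i+j+s=p}}f_s(p_i(x),p_j(y))\right).
\]
Similarly, expanding the right-hand side $P_\uplambda(E_\uplambda(x,y))$ with $E_\uplambda=\sum_{j\geqslant 0}\uplambda^j\cdot e_j$ and using linearity of each $p_i$ gives
\[
P_\uplambda(E_\uplambda(x,y))=\sum_{i\geqslant 0}\sum_{j\geqslant 0}\uplambda^{i+j}\cdot p_i(e_j(x,y))=\sum_{p\geqslant 0}\uplambda^p\left(\sum_{\substack{i,j\geqslant 0\\ i+j=p}}p_i(e_j(x,y))\right).
\]
Since these are equal as elements of $A_K$ for all $x,y$, and $\uplambda$ is an indeterminate, the two formal series coincide if and only if all coefficients of $\uplambda^p$ agree, which is precisely (\ref{ed2}). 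This establishes both implications at once.

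The only genuinely delicate points are formal rather than computational: one must note that all sums involved are finite (the deformations $F_\uplambda$, $E_\uplambda$ and the map $P_\uplambda$ all have only finitely many nonzero terms by hypothesis), so reindexing by the total degree $p$ and interchanging the order of summation are legitimate, and one should remark that each side lies in the appropriate graded component so that comparing homogeneous pieces of degree $0$ is automatic since every $f_s$, $e_j$, $p_i$ is of degree $0$ and defined over $k$. I expect the main obstacle to be purely notational — keeping the triple-index sum on the left and the double-index sum on the right aligned with the single running parameter $p$ — but no conceptual difficulty arises, and the argument is a routine "compare coefficients" proof exactly parallel to Proposition \ref{prop301}.
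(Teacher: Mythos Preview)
Your proposal is correct and follows essentially the same approach as the paper's own proof: both expand $F_\uplambda(P_\uplambda(x),P_\uplambda(y))$ and $P_\uplambda(E_\uplambda(x,y))$ as formal power series in $\uplambda$, reindex by total degree $p$, and invoke the fact that $\uplambda$ is an indeterminate to conclude that (\ref{ed1}) is equivalent to the coefficientwise identity (\ref{ed2}). Your extra remarks on finiteness of the sums and the degree-$0$ condition are sound but not strictly needed beyond what the paper already assumes.
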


\begin{proof} 
Note that  $P_\uplambda(x)=\sum_{i=0}^\infty \uplambda^i\cdot p_i(x)$ and $P_\uplambda(y)=\sum_{j=0}^\infty \uplambda^j\cdot p_j(y)$. Thus
\begin{eqnarray*}
F_\uplambda(P_\uplambda(x),P_\uplambda(y))&=&\sum_{s=0}^\infty \uplambda^s\cdot f_s\left(\sum_{i=0}^\infty \uplambda^i\cdot p_i(x),\sum_{j=0}^\infty \uplambda^j\cdot p_j(y)\right)\\
&=& \sum_{p\geqslant 0}\uplambda^p\cdot \left(\sum_{i,j,s\geqslant 0}^{i+j+s=p}f_s(p_i(x),p_j(y))\right).
\end{eqnarray*}
Since $E_\uplambda(x,y)=\sum_{j\geqslant 0} \uplambda^j\cdot e_j(x,y)$, it follows that
\begin{eqnarray*}
P_\uplambda(E_\uplambda(x,y))&=&\sum_{i=0}^\infty \uplambda^i\cdot p_i\left(\sum_{j\geqslant 0} \uplambda^j\cdot e_j(x,y)\right)= \sum_{p\geqslant 0}\uplambda^p\cdot \left(\sum_{i,j\geqslant 0}^{i+j=p}p_i(e_j(x,y))\right).
\end{eqnarray*}
Therefore, the two equations (\ref{ed1}) and (\ref{ed2}) are equivalent because $\uplambda$ is an indeterminate. 
\end{proof}

\begin{exam}\label{exam3.9}
{\rm
Consider the case $p=1$ in (\ref{ed2}) and write $xy$ for the multiplication of two elements $x,y\in A_K$.
Note that $p_0$ is the identity map. Thus the left-hand side of (\ref{ed2})  becomes 
\begin{eqnarray*}
\sum_{i,j,s\geqslant 0}^{i+j+s=1}f_s(p_i(x),p_j(y))&=&f_1(p_0(x),p_0(y))+f_0(p_1(x),p_0(y))+f_0(p_0(x),p_1(y))\\
&=&f_1(x,y)+p_1(x)y+xp_1(y)
\end{eqnarray*}
where $f_1\in Z^2_0(A_K,A_K)$ and $p_1\in C^1_0(A_K,A_K)$. The right-hand side is 
$$\sum_{i,j\geqslant 0}^{i+j=1}p_i(e_j(x,y))=p_1(e_0(x,y))+p_0(e_1(x,y))=p_1(xy)+e_1(x,y).$$
Hence, the equation (\ref{ed2}) is equivalent to that $f_1(x,y)+p_1(x)y+xp_1(y)=p_1(xy)+e_1(x,y)$, i.e., 
\begin{equation}
\label{ed3}
e_1(x,y)-f_1(x,y)=p_1(x)y+xp_1(y)-p_1(xy)=d_1(p_1)(x,y)
\end{equation}
where $d_1$ denotes the first coboundary operator in Section 2.1, and $d_1(p_1)\in B^2_0(A_K,A_K)$; see \cite[Example 3.2]{CZ25}. 
This means that two infinitesimal deformations $F_\uplambda=f_0+\uplambda\cdot f_1$
and $E_\uplambda=e_0+\uplambda\cdot e_1$ are equivalent if and only if $f_1$ and $e_1$ are 
equal to each other, up to a 2-coboundary of degree 0. 
\hbo}\end{exam}

\begin{rem}\label{rem310}
{\rm
From this example,  we see that the second cohomology space $H^2_0(A,A)$  actually
describes the equivalence  classes of  infinitesimal deformations of $(A,\mu)$.
In particular, $F_\uplambda=E_\uplambda$ if and only if $p_1$ is an $\upepsilon$-derivation of degree 0; see 
\cite[Example 3.9]{CZ25}. Therefore, an $\upepsilon$-derivation $d$ of degree 0 can be used to construct an ``automorphism''  $P_\uplambda=p_0+\uplambda\cdot d$ of an infinitesimal deformation  $F_\uplambda=f_0+\uplambda\cdot f_1$.
\hbo}\end{rem}

\subsection{Extendable infinitesimal deformations}
Suppose that $F_\uplambda=f_0+\uplambda\cdot f_1$ denotes an infinitesimal deformation of a left-symmetric color algebra $(A,\mu)$ over $k$. For $p\geqslant 2$, we say that $F_\uplambda$ is \textit{nontrivially $p$-extendable} if there exist some nonzero $f_2,\dots,f_p\in C^2_0(A_K,A_K)$ defined over $k$ such that 
\begin{equation}
\label{eid}
\sum_{i=1}^{p-1} f_i\ast f_{p-i}=-d_2(f_p).
\end{equation}
This is equivalent to saying that $F_{\uplambda,p}:=f_0+\uplambda\cdot f_1+\uplambda^2\cdot f_2+\cdots+\uplambda^{p}\cdot f_p$
is a deformation of $A$ of order $p$.  We say that $F_\uplambda$ is \textit{trivially extended} to $F_{\uplambda,p}$  if 
$f_2=f_3=\cdots=f_p=0$.

To prove Theorem \ref{mt}, we need the following two lemmas. 

\begin{lem}\label{lem33-1}
Let $p\geqslant 2$.  If an infinitesimal deformation $F_\uplambda=f_0+\uplambda\cdot f_1$ can be extended to
$$F_{\uplambda,p-1}=\sum_{i=0}^{p-1} \uplambda^i\cdot f_i,$$
then
\begin{equation}
\label{ }
\sum_{i=1}^{p-1} f_i\ast f_{p-i}\in Z_0^3(A_K,A_K).
\end{equation}
\end{lem}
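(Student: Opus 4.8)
Put $\Phi:=\sum_{i=1}^{p-1}f_i\ast f_{p-i}$. A short computation with the bicharacter relations (in particular $\upepsilon(a,b)\upepsilon(b,a)=1$) shows that $\Phi$ is $\upepsilon$-alternating in its first two slots and homogeneous of degree $0$, so $\Phi\in C^3_0(A_K,A_K)$, and the assertion is that $d_3(\Phi)=0$. The approach is the classical one of Nijenhuis--Richardson \cite{NR64}: recognize the deformation equations (\ref{lsci2}) as the Maurer--Cartan recursion of a differential graded Lie algebra carried by the cochain complex of $A_K$, and then argue that the degree-$p$ obstruction $\Phi$ is automatically a $3$-cocycle once (\ref{lsci2}) has been solved through order $p-1$. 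As $A_K$ is again a left-symmetric color algebra over $K$ and the identities below are universal, the extension of scalars and the hypothesis ``defined over $k$'' play no role here.

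The ingredients I would assemble are: (i) the $\upepsilon$-twisted composition product $\ast$, suitably extended to all cochain degrees, makes $\bigoplus_n C^{n+1}_0(A_K,A_K)$ (with $C^{n+1}_0$ placed in degree $n$) into a graded right pre-Lie algebra, so that the graded commutator $[f,g]:=f\ast g-(-1)^{(\deg f)(\deg g)}g\ast f$ is a graded Lie bracket; in particular $[f,g]=f\ast g+g\ast f$ for two $2$-cochains $f,g$, while $f_0\ast f_0=0$ is exactly the left-symmetric color identity (\ref{lsa}); (ii) up to a universal sign the coboundary operator is the inner derivation $[f_0,\,\cdot\,]$ of this bracket, i.e. $d_2(h)=f_0\ast h+h\ast f_0$ for $h\in C^2_0$ and $d_3=\pm[f_0,\,\cdot\,]$ on $C^3_0$, which in particular re-proves $d_3\circ d_2=0$ from $f_0\ast f_0=0$; (iii) the hypothesis supplies, via Corollary~\ref{coro305}, the relations $\tfrac12\sum_{i+j=q,\,i,j\geqslant 1}[f_i,f_j]=\sum_{i+j=q,\,i,j\geqslant 1}f_i\ast f_j=-d_2(f_q)$ for every $q$ with $1\leqslant q\leqslant p-1$ (the case $q=1$ being that $f_1$ is a $2$-cocycle, Corollary~\ref{coro3.3}). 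Granting (i)--(iii) and writing $\Phi=\tfrac12\sum_{i+j=p,\,i,j\geqslant 1}[f_i,f_j]$, one computes
$$
d_3(\Phi)=\tfrac12\!\!\sum_{\substack{i+j=p\\ i,j\geqslant 1}}\!\! d_3[f_i,f_j]=\!\!\sum_{\substack{i+j=p\\ i,j\geqslant 1}}\!\![d_2(f_i),f_j]=-\tfrac12\!\!\sum_{\substack{a+b+j=p\\ a,b,j\geqslant 1}}\!\![[f_a,f_b],f_j]=0,
$$
where the second equality uses the graded Leibniz rule $d_3[f,g]=[d_2f,g]-[f,d_2g]$ together with the symmetry $[f,g]=[g,f]$ of the bracket on $2$-cochains, the third inserts the order-$(\leqslant p-1)$ relations $d_2(f_i)=-\tfrac12\sum_{a+b=i,\,a,b\geqslant 1}[f_a,f_b]$ from (iii) (legitimate since every index appearing lies in $\{1,\dots,p-1\}$), and the last equality is the graded Jacobi identity: grouping the ordered triples $(a,b,j)$ by their underlying multiset and using $[f_a,f_b]=[f_b,f_a]$ turns each group into a positive multiple of a cyclic sum $[[f_a,f_b],f_j]+[[f_b,f_j],f_a]+[[f_j,f_a],f_b]=0$. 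Hence $\Phi\in Z^3_0(A_K,A_K)$.

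I expect the real work --- and hence the main obstacle --- to be ingredient (i) together with the precise form of (ii): verifying the graded pre-Lie identity for $\ast$ and pinning down the exact Leibniz rule for $d_3$ with the correct $\upepsilon$-factors, because the color signs $\upepsilon(|x|,|y|)$ propagate through every insertion in $f\ast g$ and must be reconciled with the signs $(-1)^{i+1}$ and the commutator terms in the coboundary formula of Section~\ref{sec2}. Once those identities are secured, the index bookkeeping in the displayed computation is routine. A more hands-on alternative avoiding the graded-Lie formalism is to expand $d_3(\Phi)(x_1,x_2,x_3,x_4)$ directly from the definition of $d_3$, substitute each $d_2(f_i)$ using the order-$(\leqslant p-1)$ instances of Corollary~\ref{coro305}, and verify the cancellation term by term; alternatively one may transport the statement to Lie color cohomology via \cite[Theorem~1.1]{CZ24} and invoke the classical fact that obstruction cochains in Lie deformation theory are cocycles.
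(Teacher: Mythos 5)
Your route is genuinely different from the paper's. The paper's entire proof is two lines: it observes that $F_{\uplambda,p-1}$ ``can be trivially extended'' to $F_{\uplambda,p}=F_{\uplambda,p-1}+\uplambda^p\cdot f_p$ with $f_p=0$, and then applies $d_3$ to both sides of (\ref{eid}) to get $d_3\bigl(\sum_{i=1}^{p-1}f_i\ast f_{p-i}\bigr)=d_3(-d_2(f_p))=0$. Be aware that this argument is logically problematic: asserting that the extension by $f_p=0$ satisfies (\ref{eid}) at order $p$ is exactly the assertion that $\sum_{i=1}^{p-1}f_i\ast f_{p-i}=-d_2(0)=0$, i.e.\ that the obstruction already vanishes --- which is the very point at issue (and would render the hypothesis $H^3_0=0$ in Theorem \ref{mt} superfluous). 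Your Nijenhuis--Richardson argument is the conceptually correct one: it only uses the order-$\leqslant p-1$ relations, which is what the hypothesis actually provides, and it proves cocycle-ness rather than vanishing. The index bookkeeping in your displayed computation (symmetrizing $d_3[f_i,f_j]=[d_2f_i,f_j]+[d_2f_j,f_i]$, substituting Corollary \ref{coro305} at orders $\leqslant p-1$, and killing the triple sum by the Jacobi identity) is sound granting your ingredients, and your preliminary check that each $f\ast g$ is $\upepsilon$-alternating in the first two slots, hence lands in $C^3_0$, is correct.

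That said, as a proof your proposal has a genuine gap, and you have located it yourself: ingredients (i) and (ii) are asserted, not proven, and they carry essentially all of the weight. The paper defines $\ast$ only on pairs of $2$-cochains; there is no extension of $\ast$ to a degree-$1$ map $C^2\otimes C^3\to C^4$ (or to general degrees), no graded pre-Lie identity, and no proof that $d_3=\pm[f_0,\cdot]$ or of the graded Leibniz rule $d_3[f,g]=[d_2f,g]-[f,d_2g]$. One can check directly from the coboundary formula that $d_2(h)=-(f_0\ast h+h\ast f_0)$ for $h\in C^2_0$, which supports your picture, but the degree-$3$ analogue involves the correction terms $f(\dots,[x_j,x_i],\dots)$ in $d_3$ and the propagation of $\upepsilon$-factors through every insertion slot; reconciling these is a nontrivial verification that must be written out before the displayed computation is legitimate. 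Your two fallback suggestions do not close this gap for free either: the brute-force expansion of $d_3(\Phi)$ on four arguments is exactly the same verification in disguise, and transporting the statement to Lie color cohomology via \cite[Theorem 1.1]{CZ24} would additionally require showing that the obstruction cochain $\sum f_i\ast f_{p-i}$ corresponds, under that isomorphism of complexes, to the classical Lie-theoretic obstruction cochain --- which is again not immediate. So the plan is the right one, but the lemma is not yet proved until (i)--(ii) are established with the correct $\upepsilon$-signs.
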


\begin{proof}
Clearly, $F_{\uplambda,p-1}$ can be trivially extended to a deformation of order $p$:
$$F_{\uplambda,p}=\sum_{i=0}^{p-1} \uplambda^i\cdot f_i+\uplambda^p\cdot f_p$$
where $f_p=0$. Thus $F_{\uplambda}$ can be extended to $F_{\uplambda,p}$.  By (\ref{eid}) we see that
$$d_3\left(\sum_{i=1}^{p-1} f_i\ast f_{p-i}\right)=d_3(-d_2(f_p))=-d_3(d_2(f_p))=0$$
which means that $\sum_{i=1}^{p-1} f_i\ast f_{p-i}$ belongs to the $0$-th component of $\ker(d_3)$, i.e., $Z^3_0(A_K,A_K)$. 
\end{proof}

\begin{lem}\label{lem33-2}
Every infinitesimal deformation is nontrivially $2$-extendable.
\end{lem}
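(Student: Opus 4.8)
The plan is to show that the order-one obstruction $f_1\ast f_1$ is itself a $3$-coboundary, so that equation (\ref{eid}) with $p=2$ can be solved for a nonzero $f_2$. By Lemma \ref{lem33-1} (applied with $p=2$, noting that $F_{\uplambda,1}$ is trivially a deformation of order $1$), we already know $f_1\ast f_1\in Z^3_0(A_K,A_K)$, so the content of the claim is that this cocycle is a coboundary \emph{and} that a solution $f_2$ can be chosen nonzero. First I would recall from Corollary \ref{coro3.3} that $f_1\in Z^2_0(A_K,A_K)$, i.e. $d_2(f_1)=0$; this is the hypothesis that makes $F_{\uplambda,1}$ an honest infinitesimal deformation.

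The key step is to produce $f_2$ explicitly. I would try $f_2=0$ first: if $f_1\ast f_1=0$ identically, then (\ref{eid}) holds with $f_2=0$, but that is the \emph{trivial} extension, which the lemma explicitly wants to avoid. So the real work is to show that even when $f_1\ast f_1$ happens to be nonzero as an element of $Z^3_0$, one can still find a \emph{nonzero} $f_2\in C^2_0(A_K,A_K)$ defined over $k$ with $d_2(f_2)=-f_1\ast f_1$. The natural move here is to exploit that $\uplambda$ is transcendental: one can always rescale. Concretely, given that $F_{\uplambda,1}=f_0+\uplambda f_1$ is a deformation, consider the reparametrized deformation obtained by replacing $\uplambda$ with $\uplambda+\uplambda^2$ (or more generally $\uplambda\mapsto \uplambda + c\uplambda^2$ for some $c\in k^\times$); this is an equivalence of deformations over $K$, and it turns $f_0+\uplambda f_1$ into $f_0 + \uplambda f_1 + \uplambda^2 f_1 + (\text{higher order})$. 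Truncating at order $2$ gives $F_{\uplambda,2}=f_0+\uplambda f_1+\uplambda^2 f_1$, and since this is genuinely the truncation of a left-symmetric color deformation, Proposition \ref{prop301} (equivalently Corollary \ref{coro305}) forces (\ref{eid}) to hold with $f_2=f_1$. Because $f_1\ne 0$ for any nontrivial infinitesimal deformation — and if $f_1=0$ the statement is vacuous or one instead cites that the zero infinitesimal deformation extends nontrivially by any $1$-cocycle that is not a coboundary — we obtain a nonzero $f_2$, and $f_2=f_1$ is defined over $k$ by hypothesis on $f_1$.

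Assembling these pieces: (i) cite Corollary \ref{coro3.3} for $d_2(f_1)=0$; (ii) observe $f_1\ast f_1 = -d_2(f_1) \cdot(\text{something})$ — more precisely, verify directly from the definition of $\ast$ and bilinearity that $f_1\ast f_1 = -d_2(f_1)$ when the coefficient algebra product is $f_1$ itself, which is exactly the $p=2$ instance of (\ref{lsci2}) for the rescaled deformation; (iii) conclude $f_2=f_1$ satisfies (\ref{eid}) and is nonzero and defined over $k$. The main obstacle I anticipate is making the reparametrization argument airtight at the level of the truncated cochain data rather than the full power series: one must check that substituting $\uplambda\mapsto\uplambda+\uplambda^2$ into $f_0+\uplambda f_1$ and truncating really does land in the class of order-$2$ deformations, i.e. that the resulting $(f_1,f_2)=(f_1,f_1)$ satisfies (\ref{lsci2}) for $p=0,1,2$. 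For $p=0$ this is the left-symmetric identity for $\upmu_K$; for $p=1$ it is Corollary \ref{coro3.3}; for $p=2$ it unwinds to $f_1\ast f_0 + f_0\ast f_1 + f_1\ast f_1 = 0$, and since $f_1\ast f_0 + f_0\ast f_1 = -(f_1\ast f_1)$ precisely because $d_2(f_1)=0$ combined with the structure equation, the identity closes. Once this bookkeeping is done the lemma follows immediately.
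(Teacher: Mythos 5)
Your plan reaches the same destination as the paper's proof, but by a longer and, as written, partly circular road. The whole content of the lemma is the single identity $f_1\ast f_1=0$: once that is known, the order-two condition (\ref{eid}) reads $d_2(f_2)=0$, so \emph{any} nonzero $f_2\in Z^2_0(A_K,A_K)$ gives a nontrivial $2$-extension --- your choice $f_2=f_1$ is one valid instance, while the paper simply takes an arbitrary nonzero $2$-cocycle. The paper obtains $f_1\ast f_1=0$ directly: an infinitesimal deformation $F_{\uplambda,1}=f_0+\uplambda\cdot f_1$ is treated as a left-symmetric color algebra structure on $A_K$, so expanding the identity (\ref{lsa}) for $F_{\uplambda,1}$ in powers of $\uplambda$ and using that the $\uplambda^0$- and $\uplambda^1$-coefficients vanish (the identity for $\upmu_K$ and Corollary \ref{coro3.3}) forces the $\uplambda^2$-coefficient, which is exactly $(f_1\ast f_1)(x,y,z)$, to vanish as well; equivalently, apply Proposition \ref{prop301} with $f_i=0$ for $i\geqslant 2$ and read off the case $p=2$.

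The soft spot in your write-up is the justification of this key identity. You deduce it from ``the $p=2$ instance of (\ref{lsci2}) for the rescaled deformation $f_0+(\uplambda+\uplambda^2)f_1$,'' but knowing that this rescaled family satisfies (\ref{lsci2}) is precisely what you are trying to establish when you check that $(f_1,f_1)$ is an admissible pair; and the step ``$f_1\ast f_0+f_0\ast f_1=-(f_1\ast f_1)$ because $d_2(f_1)=0$'' is literally the assertion $0=-(f_1\ast f_1)$, i.e.\ it assumes the conclusion. The reparametrization can be made honest --- $\uplambda\mapsto\uplambda+\uplambda^2$ is induced by a $k$-algebra automorphism of $K$, and transporting $F_\uplambda$ along the associated semilinear bijection of $A_K$ genuinely produces a new deformation whose second coefficient is $f_1$ --- but this detour buys nothing, since the direct expansion of the identity for $F_{\uplambda,1}$ itself already yields $f_1\ast f_1=0$ in one line. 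Your attention to the degenerate case $f_1=0$ is a fair point: there the argument needs $Z^2_0(A_K,A_K)\neq 0$, a gap the paper also leaves open by tacitly assuming $f_1\neq 0$.
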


\begin{proof}
We assume that $F_\uplambda=f_0+\uplambda\cdot f_1$ is an infinitesimal deformation. We first claim that $f_1\ast f_{1}=0$. In fact, let us write $xy$ for $f_0(x,y)$ and assume that $x,y,z$ are arbitrary homogeneous elements in $A_K$. Since $F_\uplambda=f_0+\uplambda\cdot f_1$ is a left-symmetric color algebra, we see that
\begin{eqnarray*}
F_\uplambda(F_\uplambda(x,y),z)-F_\uplambda(x,F_\uplambda(y,z))-\upepsilon(|x|,|y|)\Big(F_\uplambda(F_\uplambda(y,x),z)-F_\uplambda(y,F_\uplambda(x,z))\Big) &=&0
\end{eqnarray*}
in which
\begin{eqnarray*}
F_\uplambda(F_\uplambda(x,y),z) & = & (f_0+\uplambda\cdot f_1)((f_0+\uplambda\cdot f_1)(x,y),z) = (f_0+\uplambda\cdot f_1)(xy+\uplambda\cdot f_1(x,y),z) \\ 
 & = & (f_0+\uplambda\cdot f_1)(xy,z)+(f_0+\uplambda\cdot f_1)(\uplambda\cdot f_1(x,y),z) \\ 
  & = & (xy)z+\uplambda\cdot (f_1(xy,z)+ f_1(x,y)z)+\uplambda^2\cdot f_1(f_1(x,y),z) \\ 
F_\uplambda(x,F_\uplambda(y,z)) & = & x(yz)+\uplambda\cdot(f_1(x,yz)+xf_1(y,z))+\uplambda^2\cdot f_1(x,f_1(y,z)) \\ 
F_\uplambda(F_\uplambda(y,x),z)&=&(yx)z+\uplambda\cdot (f_1(yx,z)+ f_1(y,x)z)+\uplambda^2\cdot f_1(f_1(y,x),z) \\ 
F_\uplambda(y,F_\uplambda(x,z)) & = & y(xz)+\uplambda\cdot(f_1(y,xz)+yf_1(x,z))+\uplambda^2\cdot f_1(y,f_1(x,z)). 
\end{eqnarray*}
Note that $f_0$ gives a left-symmetric color algebra structure on $A_K$ and $f_1$ is a 2-cocycle of degree 0. Thus
it follows from (\ref{lsa}) and Corollary \ref{coro3.3} that
$$\uplambda^2\cdot\left(f_1(f_1(x,y),z)-f_1(x,f_1(y,z))-\upepsilon(|x|,|y|)\Big(f_1(f_1(y,x),z)-f_1(y,f_1(x,z))\Big)\right)=0$$
which implies that $(f_1\ast f_{1})(x,y,z)=0$ because $\uplambda$ is an indeterminate.  The claim holds. 

Now note that $Z^2_0(A_K,A_K)\neq 0$ because $f_1$ is nonzero element of $Z^2_0(A_K,A_K)$. We may take any nonzero $f_2\in Z^2_0(A_K,A_K)$. Then
$$-d_2(f_2)=0=f_1\ast f_{1}.$$
By Remark \ref{rem3.6}, we see that $F_\uplambda$ can be extended to $f_0+\uplambda\cdot f_1+\uplambda^2\cdot f_2.$ 
This extension is nontrivial because $f_2$ is not zero. 
\end{proof}

Now we are ready to prove Theorem \ref{mt}.

\begin{proof}[Proof of Theorem \ref{mt}]
We use induction on $p$ to show this result. Note that Lemma \ref{lem33-2} already proves this statement for the case of $p=2$. Let us consider the general case $p\geqslant 3$ and by induction hypothesis, we may assume that an infinitesimal deformation $F_\uplambda=f_0+\uplambda\cdot f_1$ can be extended nontrivially to
$F_{\uplambda,p-1}=\sum_{i=0}^{p-1} \uplambda^i\cdot f_i.$ Then
Lemma \ref{lem33-1}, together with the assumption that $H^3_0(A_K,A_K)=0$, implies that
$$\sum_{i=1}^{p-1} f_i\ast f_{p-i}\in Z_0^3(A_K,A_K)=B_0^3(A_K,A_K).$$
Hence, there exists an $f_2\in C^2_0(A_K,A_K)$ such that 
$$\sum_{i=1}^{p-1} f_i\ast f_{p-i}=-d_2(f_2)$$
and so by Remark \ref{rem3.6}, it follows that $F_\uplambda$ can be extended to
$F_{\uplambda,p}=\sum_{i=0}^{p} \uplambda^i\cdot f_i.$ As at least one element of $\{f_2,\dots,f_{p-1}\}$ is nonzero, there also exists 
at least one nonzero element in $\{f_2,\dots,f_{p-1},f_p\}$. In other words, $F_{\uplambda,p}$ is a nontrivial extension of $F_{\uplambda}$.
\end{proof}

\section{Operators on Left-symmetric Color Algebras} \label{sec4}
\setcounter{equation}{0}
\renewcommand{\theequation}
{4.\arabic{equation}}
\setcounter{theorem}{0}
\renewcommand{\thetheorem}
{4.\arabic{theorem}}

\noindent This section introduces and explores two families of operators on left-symmetric color algebras: Nijenhuis operators 
and Rota-Baxter operators, which naturally appear in the condition for an infinitesimal deformation to be trivial. We also study
the relationship between these two types of operators and describe their geometric structures in the two-dimensional case.

\subsection{Nijenhuis operators} 
Let $(A,\mu)$ be a left-symmetric color algebra over $k$ with respect to $\upepsilon\in B_k(G)$.
To understand the class of  infinitesimal deformations of $A$ that are equivalent to the trivial 
infinitesimal deformation $f_0+\uplambda\cdot 0$, we need to analyze  (\ref{ed2}) in Proposition \ref{prop3.8} for special degrees $p=1,2,$ and $3$. 

Let us begin with two general  infinitesimal deformations 
$F_\uplambda=f_0+\uplambda\cdot f_1$ and $E_\uplambda=f_0+\uplambda\cdot e_1$, that are equivalent via $P_\uplambda=p_0+\uplambda\cdot p_1$.  Note that $f_0=\mu_K$ and $p_0$ denotes the identity map. 
Write $xy$ for $\mu_K(x,y)$ for $x,y\in A_K$.
Setting $p=1$ in  (\ref{ed2}), we have seen in Example \ref{exam3.9} that
\begin{equation}\label{eq4.1}
e_1(x,y)=f_1(x,y)+p_1(x)y+xp_1(y)-p_1(xy)
\end{equation}
Setting $p=2$ and $3$ in  (\ref{ed2}), we obtain
\begin{eqnarray}
f_1(p_1(x),y)+f_1(x,p_1(y))+p_1(x)p_1(y) & = & p_1(e_1(x,y)) \label{eq4.2}\\
f_1(p_1(x),p_1(y)) & = & 0 
\end{eqnarray}
respectively. 

Now assume that $F_\uplambda$ is trivial (i.e., $f_1=0$). Then it follows from (\ref{eq4.1}) and
(\ref{eq4.2}) that
$e_1(x,y)=p_1(x)y+xp_1(y)-p_1(xy)$ and $p_1(x)p_1(y) = p_1(e_1(x,y))$. This means that $p_1$ must satisfy the following condition:
\begin{equation}
\label{ }
p_1(x)p_1(y) =p_1\Big(p_1(x)y+xp_1(y)-p_1(xy)\Big)
\end{equation}
for all $x,y\in A_K$.

Thus it is reasonable to introduce the following concept, which generalizes the notion of Nijenhuis operators on left-symmetric algebras; see for example, \cite[Section 4]{WSBL19}.

\begin{defn}{\rm
Let $A$ be a finite-dimensional left-symmetric color algebra over $k$ with respect to $\upepsilon\in B_k(G)$.
A homogeneous linear map $P:A\ra A$ is said to be a \textit{Nijenhuis operator} on $A$ if 
\begin{equation}
\label{NO1}
P(x)P(y) =\upepsilon(|P|+|x|,|P|)P\big(P(x)y\big)+P\big(xP(y)\big)-\upepsilon(|x|,|P|)P\big(P(xy)\big)
\end{equation}
for all homogeneous $x,y\in A$.
}\end{defn}

We write $\NN_A$ for the set of all Nijenhuis operators on a left-symmetric color algebra $A$.

\begin{prop}
Let  $P\in\NN_A$ be a Nijenhuis operator of $\upepsilon(|P|,|P|)=1$. Then 
\begin{equation}
\label{NO2}
P^i(x)P^j(y) =\upepsilon(i\cdot |P|+|x|,j\cdot|P|)P^j\big(P^i(x)y\big)+P^i\big(xP^j(y)\big)-\upepsilon(|x|,j\cdot|P|)P^{i+j}(xy)
\end{equation}
for all $i,j\in\N$ and homogeneous elements $x,y\in A$.
\end{prop}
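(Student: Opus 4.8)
The plan is to prove \eqref{NO2} by a double induction on the pair $(i,j)$, using the defining identity \eqref{NO1} as the base case $(i,j)=(1,1)$ and the trivial cases where $i=0$ or $j=0$. The key technical device is that \eqref{NO1}, read as a rewriting rule, expresses the ``unbalanced'' product $P(x)P(y)$ in terms of products with a lower total power of $P$ on one side, namely the three terms $P(P(x)y)$, $P(xP(y))$, and $P(P(xy))$, each of which has the powers of $P$ more concentrated. The condition $\upepsilon(|P|,|P|)=1$ is what makes the bookkeeping of the bicharacter factors collapse cleanly: it guarantees that $\upepsilon(m|P|,n|P|)=1$ for all $m,n\in\N$, so that when we iterate \eqref{NO1} the only surviving $\upepsilon$-factors are those involving $|x|$ and $|y|$, matching exactly the factors $\upepsilon(i|P|+|x|,j|P|)$ and $\upepsilon(|x|,j|P|)$ in \eqref{NO2}.

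The induction would proceed as follows. First dispose of the degenerate cases: if $j=0$, \eqref{NO2} reads $P^i(x)y=P^i(xy)+P^i(xy)-P^i(xy)$, wait---more carefully, for $j=0$ the right-hand side becomes $P^0(P^i(x)y)+P^i(x\cdot y)-P^{i}(xy)=P^i(x)y$, which is an identity; similarly for $i=0$. Next, the base case $(1,1)$ is precisely \eqref{NO1} together with $\upepsilon(|P|,|P|)=1$, which lets us rewrite $\upepsilon(|P|+|x|,|P|)=\upepsilon(|x|,|P|)$. For the inductive step I would fix $j$ and induct on $i$ (then induct on $j$ in an outer loop, or run the symmetric argument): assuming \eqref{NO2} holds for $(i,j)$, compute $P^{i+1}(x)P^j(y)$ by writing $P^{i+1}(x)=P(P^i(x))$ and applying \eqref{NO1} with the homogeneous element $P^i(x)$ (of degree $i|P|+|x|$) in place of $x$; this produces three terms, to each of which I apply the induction hypothesis \eqref{NO2} (with suitable arguments and smaller indices) to re-express them. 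Then I collect the resulting terms, using the bicharacter identities $\upepsilon(a+b,c)=\upepsilon(a,c)\upepsilon(b,c)$ and $\upepsilon(m|P|,n|P|)=1$ to simplify the coefficients, and verify that the four terms reorganize into the claimed expression for $P^{i+1}(x)P^j(y)$. A convenient intermediate lemma worth isolating is the special identity $P^{i+j}(xy)$ appearing as the ``fully concentrated'' term: one checks that the three $P(\cdots)$ terms produced at each step contribute in such a way that all but one copy of the mixed terms cancel, leaving exactly one $P^{i+j+1}(xy)$ with coefficient $-\upepsilon(|x|,j|P|)$.

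The main obstacle I anticipate is purely combinatorial: keeping track of the $\upepsilon$-factors through the nested substitutions and making sure the cancellations among the three terms generated by each application of \eqref{NO1} actually occur. In the non-graded (classical left-symmetric) case this is the standard computation behind iterating a Nijenhuis operator, but here every product and every ``pull $P$ out'' step drags along a bicharacter value, and one must be careful that the degree of the element fed into \eqref{NO1} is $i|P|+|x|$ rather than $|x|$, which is exactly why the factor $\upepsilon(i|P|+|x|,j|P|)$ (and not $\upepsilon(|x|,j|P|)$) shows up on the first term of the right-hand side of \eqref{NO2}. I would handle this by carrying out one representative step of the induction in detail---say the passage from $(i,j)$ to $(i+1,j)$---displaying the three terms and their $\upepsilon$-coefficients explicitly, and then remarking that the remaining direction ($j\mapsto j+1$) is entirely symmetric, invoking the $\upepsilon$-skew-symmetry and $\upepsilon(|P|,|P|)=1$ to transfer the argument from left-multiplication by $P$ to right-multiplication.
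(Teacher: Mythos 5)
Your proposal is correct and follows essentially the same route as the paper: a double induction whose base consists of the trivial cases $i=0$ or $j=0$, with the inductive step obtained by writing the product as $P\big(P^{i}(x)\big)P\big(P^{j-1}(y)\big)$, applying \eqref{NO1}, invoking the induction hypothesis on the three resulting products, and collapsing the bicharacter coefficients via $\upepsilon(|P|,|P|)=1$ and multiplicativity of $\upepsilon$. The only cosmetic differences are that the paper passes from $(i-1,j)$, $(i,j-1)$, $(i-1,j-1)$ to $(i,j)$ in a single step (rather than your inner/outer loop on $i$ and $j$), and that the coefficient bookkeeping you defer is carried out explicitly there.
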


\begin{proof} 
We may use induction on $i,j$ to prove this result.
The statement  holds immediately for the case either $i=0$ or $j=0$ because
$$xP^j(y) =\upepsilon(|x|,j\cdot|P|)P^j\big(xy\big)+xP^j(y)-\upepsilon(|x|,j\cdot|P|)P^{j}(xy)$$
and $P^i(x)y =P^i(x)y+P^i(xy)-P^{i}(xy)$. 

Now we assume that $i,j\in\N^+$. The induction hypothesis says that
\begin{eqnarray*}
P^{i-1}(x)P^{j}(y)&=&c_1\cdot P^{j}\big(P^{i-1}(x)y\big)+P^{i-1}\big(xP^{j}(y)\big)-c_2\cdot P^{i+j-1}(xy)\\
P^i(x)P^{j-1}(y) &=&c_3\cdot P^{j-1}\big(P^i(x)y\big)+P^i\big(xP^{j-1}(y)\big)-c_4\cdot P^{i+j-1}(xy)
\end{eqnarray*}
where
$c_1:=\upepsilon((i-1)\cdot|P|+|x|,j\cdot|P|), c_2:=\upepsilon(|x|,j\cdot|P|), c_3:=\upepsilon(i\cdot|P|+|x|,(j-1)\cdot|P|)$,
and $c_4:=\upepsilon(|x|,(j-1) \cdot|P|)$. 
Hence, it follows from (\ref{NO1}) that
\begin{eqnarray*}
P^{i}(x)P^{j}(y) & = & P\Big(P^{i-1}(x)\Big)P\Big(P^{j-1}(y)\Big) \\
 & = &b_1\cdot P\Big(P^{i}(x)P^{j-1}(y)\Big)+P\Big(P^{i-1}(x)P^{j}(y)\Big)-b_1\cdot P^2\big(P^{i-1}(x)P^{j-1}(y)\big)\\
 &=&b_1c_3\cdot P^{j}\big(P^i(x)y\big)+b_1\cdot P^{i+1}\big(xP^{j-1}(y)\big)-b_1c_4\cdot P^{i+j}(xy)\\
 &&+c_1\cdot P^{j+1}\big(P^{i-1}(x)y\big)+P^{i}\big(xP^{j}(y)\big)-c_2\cdot P^{i+j}(xy)\\
 &&-b_1b_2\cdot P^{j+1}\big(P^{i-1}(x)y\big)-b_1\cdot P^{i+1}\big(xP^{j-1}(y)\big)+b_1b_3\cdot P^{i+j}(xy)
\end{eqnarray*}
where $b_1:=\upepsilon((i-1)\cdot|P|+|x|,|P|), b_2:=\upepsilon((i-1)\cdot |P|+|x|,(j-1)\cdot|P|)$, and
$b_3:=\upepsilon(|x|,(j-1)\cdot|P|)$. Note that 
\begin{eqnarray*}
b_1b_2 & = & \upepsilon\Big((i-1)\cdot|P|+|x|,|P|\Big) \upepsilon\Big((i-1)\cdot |P|+|x|,(j-1)\cdot|P|\Big)\\
 & = & \upepsilon\Big((i-1)\cdot |P|+|x|,j\cdot|P|\Big)=c_1\\
b_1c_4 &=&\upepsilon\Big((i-1)\cdot|P|+|x|,|P|\Big)\upepsilon\Big(|x|,(j-1) \cdot|P|\Big)=b_1b_3.
\end{eqnarray*}
Hence, 
 \begin{equation}
\begin{aligned}
P^{i}(x)P^{j}(y) & = b_1c_3\cdot P^{j}\big(P^i(x)y\big)+P^{i}\big(xP^{j}(y)\big)-c_2\cdot P^{i+j}(xy)\\
&=b_1c_3\cdot P^j\big(P^i(x)y\big)+P^i\big(xP^j(y)\big)-\upepsilon(|x|,j\cdot|P|)\cdot P^{i+j}(xy)
\end{aligned}
 \end{equation}
Note that $\upepsilon(|P|,|P|)=1$ and
\begin{eqnarray*}
b_1c_3&=&\upepsilon\Big((i-1)\cdot|P|+|x|,|P|\Big)\upepsilon\Big(i\cdot|P|+|x|,(j-1)\cdot|P|\Big)\\
&=&\upepsilon(|P|,|P|)\upepsilon\Big((i-1)\cdot|P|+|x|,|P|\Big)\upepsilon\Big(i\cdot|P|+|x|,(j-1)\cdot|P|\Big)\\
&=&\upepsilon\Big(i\cdot|P|+|x|,|P|\Big)\upepsilon\Big(i\cdot|P|+|x|,(j-1)\cdot|P|\Big)\\
&=&\upepsilon\Big(i\cdot|P|+|x|,j\cdot|P|\Big).
\end{eqnarray*}
Therefore, $P^i(x)P^j(y) =\upepsilon(i\cdot |P|+|x|,j\cdot|P|)P^j\big(P^i(x)y\big)+P^i\big(xP^j(y)\big)-\upepsilon(|x|,j\cdot|P|)P^{i+j}(xy)$.
\end{proof}

\begin{coro}
Let  $P\in\NN_A$ be a Nijenhuis operator of $\upepsilon(|P|,|P|)=1$. Then 
$f(P)\in \NN_A$ for all $f\in k[x]_d$, where $k[x]_d$ denotes the space of power functions of degree $d$.  
\end{coro}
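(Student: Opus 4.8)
The plan is to deduce the claim directly from the identity (\ref{NO2}) of the preceding proposition, specialized to $i=j=d$. Interpreting $k[x]_d$ as the one-dimensional space $k\cdot x^d$ of power functions of degree $d$, an arbitrary $f\in k[x]_d$ has the form $f=c\,x^d$ with $c\in k$, so $f(P)=c\,P^d$. Set $Q:=f(P)=c\,P^d$. Since $c\in k$ has degree $0$ and $P$ is homogeneous of degree $|P|$, the operator $Q$ is homogeneous of degree $|Q|=d\cdot|P|$; and because $\upepsilon$ is a bicharacter with $\upepsilon(|P|,|P|)=1$ we obtain $\upepsilon(|Q|,|Q|)=\upepsilon(d|P|,d|P|)=\upepsilon(|P|,|P|)^{d^2}=1$, so $P$ (hence $Q$) lies in the regime to which (\ref{NO2}) applies.

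Next I would expand the Nijenhuis identity (\ref{NO1}) for $Q$ and match it termwise against (\ref{NO2}) at $i=j=d$. On the left, $Q(x)Q(y)=c^{2}\,P^{d}(x)P^{d}(y)$, and (\ref{NO2}) with $i=j=d$ rewrites $P^{d}(x)P^{d}(y)$ as
\[
\upepsilon\big(d|P|+|x|,\,d|P|\big)\,P^{d}\big(P^{d}(x)y\big)+P^{d}\big(xP^{d}(y)\big)-\upepsilon\big(|x|,\,d|P|\big)\,P^{2d}(xy).
\]
On the other side, using $|Q|=d|P|$ together with $Q\big(Q(x)y\big)=c^{2}P^{d}\big(P^{d}(x)y\big)$, $Q\big(xQ(y)\big)=c^{2}P^{d}\big(xP^{d}(y)\big)$, and $Q\big(Q(xy)\big)=c^{2}P^{2d}(xy)$, the right-hand side of (\ref{NO1}) evaluated on $Q$ equals
\[
\upepsilon\big(|Q|+|x|,|Q|\big)Q\big(Q(x)y\big)+Q\big(xQ(y)\big)-\upepsilon\big(|x|,|Q|\big)Q\big(Q(xy)\big)=c^{2}\cdot P^{d}(x)P^{d}(y),
\]
which is exactly $Q(x)Q(y)$. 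Hence the two sides of (\ref{NO1}) for $Q$ agree for all homogeneous $x,y\in A$, i.e., $f(P)=Q\in\PP_A$.

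I expect no genuine obstacle: the argument is routine bookkeeping with the bicharacter arguments, and the content is entirely contained in the preceding proposition. The only points requiring care are (i) the degree computation $|Q|=d|P|$, which makes the $\upepsilon$-factors appearing in (\ref{NO1}) written for $Q$ coincide verbatim with those appearing in (\ref{NO2}) at $i=j=d$ (and which is also what forces $\upepsilon(|Q|,|Q|)=1$); and (ii) that the scalar $c$ pulls out cleanly, which holds because both defining identities are homogeneous of degree two in the operator and $c$ has degree $0$. The degenerate cases $d=0$ (where $Q=c\cdot\mathrm{id}_A$) and $c=0$ (where $Q=0$) are subsumed by the same computation.
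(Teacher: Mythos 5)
Your proposal is correct and is essentially the paper's own argument: the paper proves the corollary simply by setting $i=j=d$ in (\ref{NO2}), and your write-up just makes explicit the routine bookkeeping (the degree $|Q|=d\cdot|P|$, the factor $c^{2}$ pulling out of each term of (\ref{NO1}), and the identification $P^{j}\big(P^{i}(xy)\big)=P^{i+j}(xy)$) that the paper leaves implicit.
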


\begin{proof}
Setting $i=j=d$ in (\ref{NO2}) obtains this result. 
\end{proof}

\subsection{Rota-Baxter operators}

We extend the notion of Rota-Baxter operators on a left-symmetric algebra appeared in \cite[Section 5.1]{WSBL19} to the color version.

\begin{defn}{\rm
Let $A$ be a finite-dimensional left-symmetric color algebra over $k$ with respect to $\upepsilon\in B_k(G)$. Given a fixed 
$\uplambda\in k$, a homogeneous linear map $P:A\ra A$ is called a \textit{Rota-Baxter operator} of weight $\uplambda$ on $A$ if 
\begin{equation}
\label{RBO}
P(x)P(y) =\upepsilon(|P|+|x|,|P|)P\big(P(x)y\big)+P\big(xP(y)\big)+\uplambda \cdot P(xy)
\end{equation}
for all homogeneous elements $x,y\in A$. 
}\end{defn}


We also write $\B_A(\uplambda)$ for the set of all Rota-Baxter operators of weight $\uplambda$ on a left-symmetric color algebra $A$, and $\Hom_a(A,A)$ for the space of all homogeneous linear maps of degree $a$ from $A$ to itself. Note that $a\in G$. 

\begin{prop}
Let $P\in\NN_A$. Then $P\in\B_A(0)$ if and only if the restriction of $P^2$ to $A^2$ is zero.
\end{prop}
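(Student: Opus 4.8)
The plan is to compare the two defining identities directly. A map $P$ is a Nijenhuis operator when it satisfies (\ref{NO1}), and $P \in \B_A(0)$ precisely when it satisfies (\ref{RBO}) with $\uplambda = 0$, i.e.
\begin{equation}
\label{ }
P(x)P(y) = \upepsilon(|P|+|x|,|P|)P\big(P(x)y\big) + P\big(xP(y)\big)
\end{equation}
for all homogeneous $x,y \in A$. Subtracting this equation from (\ref{NO1}), we see that for a Nijenhuis operator $P$, the condition $P \in \B_A(0)$ holds if and only if the extra term in (\ref{NO1}) vanishes, that is, $\upepsilon(|x|,|P|)P\big(P(xy)\big) = 0$ for all homogeneous $x,y$. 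Since $\upepsilon(|x|,|P|) \in k^\times$ is invertible, this is equivalent to $P(P(xy)) = 0$ for all homogeneous $x,y$, and by bilinearity (and the fact that $A$ is spanned by homogeneous elements together with the homogeneity of $P$) this is exactly the statement that $P^2$ restricted to the subspace $A^2 = \Span\{xy \mid x,y \in A\}$ is the zero map.

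Concretely, I would first unwind the "$\Rightarrow$" direction: assume $P \in \PP_A \cap \B_A(0)$; subtract (\ref{RBO}) with $\uplambda = 0$ from (\ref{NO1}) to obtain $\upepsilon(|x|,|P|)P(P(xy)) = 0$ for all homogeneous $x,y$; cancel the invertible bicharacter value; and conclude $P^2(xy) = 0$ for all homogeneous $x,y$, hence $P^2|_{A^2} = 0$. For the "$\Leftarrow$" direction: assume $P \in \PP_A$ and $P^2|_{A^2} = 0$. Then for homogeneous $x,y$, the element $xy$ lies in $A^2$ and is homogeneous, so $P^2(xy) = P(P(xy)) = 0$; substituting this into (\ref{NO1}) kills the last term and yields exactly (\ref{RBO}) with $\uplambda = 0$, so $P \in \B_A(0)$.

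The one point requiring a sentence of care — and the only place the argument is not purely formal — is the passage between "$P(P(xy)) = 0$ for all homogeneous $x,y$" and "$P^2|_{A^2} = 0$". A general element of $A^2$ is a finite sum $\sum_\ell x_\ell y_\ell$ of products of homogeneous elements, and $P^2$ is linear, so $P^2$ annihilates every such sum once it annihilates each $x_\ell y_\ell$; conversely $P^2|_{A^2} = 0$ forces $P^2(xy) = 0$ in particular for homogeneous $x,y$. Thus the homogeneous condition and the subspace condition are genuinely equivalent, and no subtlety about non-homogeneous elements arises. I do not anticipate any real obstacle here; the proof is essentially a one-line comparison of (\ref{NO1}) and (\ref{RBO}), with the invertibility of $\upepsilon$-values (all bicharacter values lie in $k^\times$) being the only fact invoked beyond the definitions.
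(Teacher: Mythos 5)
Your argument is correct and is essentially the same as the paper's: both proofs subtract the weight-$0$ Rota--Baxter identity from the Nijenhuis identity, use that $\upepsilon(|x|,|P|)\neq 0$ to cancel the bicharacter factor, and conclude that the two conditions differ exactly by the vanishing of $P^2$ on products. Your extra remark about passing from homogeneous products to all of $A^2$ by linearity is a harmless (and correct) addition that the paper leaves implicit.
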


\begin{proof} 
Given two arbitrary homogeneous elements $x,y\in A$.
Suppose the restriction of $P^2$ to $A^2$ is zero, then $P^2(xy)=0$. It follows from (\ref{NO1}) that
$P(x)P(y) =\upepsilon(|P|+|x|,|P|)P\big(P(x)y\big)+P\big(xP(y)\big)$. Thus, $P\in\B_A(0)$. Conversely, together  $P\in\B_A(0)$ 
with $P\in\NN_A$ implies that $\upepsilon(|x|,|P|)P\big(P(xy)\big)=0$. Note that $\upepsilon(|x|,|P|)\neq 0$. Thus
$P\big(P(xy)\big)=0$. Namely, the restriction of $P^2$ to $A^2$ is zero.
\end{proof}

\begin{coro}
Let $P\in \Hom_a(A,A)$ and $P^2=0$. Then $P\in\NN_A$ if and only if $P\in\B_A(0)$.
\end{coro}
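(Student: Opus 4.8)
The plan is to unwind the two definitions and invoke the previously established Proposition that relates Nijenhuis operators to Rota-Baxter operators of weight $0$. Recall that for a homogeneous linear map $P$ with $P^2 = 0$, we automatically have that the restriction of $P^2$ to $A^2$ is zero, since $P^2$ is the zero map on all of $A$. So if $P \in \PP_A$, then by the Proposition $P \in \B_A(0)$ if and only if $P^2|_{A^2} = 0$, and the latter holds trivially; hence $P \in \B_A(0)$. This gives one direction essentially for free.

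For the converse, suppose $P \in \Hom_a(A,A)$ with $P^2 = 0$ and $P \in \B_A(0)$. Here one cannot simply cite the Proposition, since it takes $P \in \PP_A$ as a hypothesis rather than a conclusion. Instead I would argue directly from the defining identities. The Rota-Baxter identity of weight $0$ reads
$$P(x)P(y) = \upepsilon(|P|+|x|,|P|)\,P\big(P(x)y\big) + P\big(xP(y)\big)$$
for all homogeneous $x,y \in A$. The Nijenhuis identity \eqref{NO1} differs from this only by the extra term $-\upepsilon(|x|,|P|)\,P\big(P(xy)\big)$. But $P\big(P(xy)\big) = P^2(xy) = 0$ because $P^2 = 0$. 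Therefore the Nijenhuis identity reduces to exactly the Rota-Baxter identity of weight $0$, which holds by assumption. Hence $P$ satisfies \eqref{NO1}, i.e., $P \in \PP_A$.

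Putting the two directions together establishes the equivalence. I do not anticipate any real obstacle here: the entire content is that the discrepancy term $P^2(xy)$ between the two defining identities vanishes identically under the hypothesis $P^2 = 0$. The only point requiring a moment's care is to make sure the degree-dependent coefficient $\upepsilon(|P|+|x|,|P|)$ matches on both sides, which it does since both \eqref{NO1} and \eqref{RBO} use precisely this coefficient on the term $P\big(P(x)y\big)$; no further manipulation of bicharacter values is needed. One could also phrase the forward direction via the Proposition for elegance, but the direct symmetric argument handles both implications uniformly and is arguably cleaner to write out.
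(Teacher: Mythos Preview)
Your proposal is correct and matches the paper's approach. The paper states this corollary without proof, as it follows immediately from the preceding Proposition together with the direct comparison of identities you give; your observation that the Nijenhuis and weight-$0$ Rota-Baxter identities differ only by the term $-\upepsilon(|x|,|P|)\,P^2(xy)$, which vanishes under $P^2=0$, is exactly the intended content.
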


\begin{prop}
Let $P\in \Hom_a(A,A)$ and $P^2=P$. Then $P\in\NN_A$ if and only if $P\in\B_A(-1)$.
\end{prop}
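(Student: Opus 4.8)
The plan is to compare the two defining identities for $P$ directly, using the hypothesis $P^2=P$ to rewrite the idempotent-projection terms. Recall that $P\in\PP_A$ means, for all homogeneous $x,y\in A$,
\begin{equation*}
P(x)P(y) =\upepsilon(|P|+|x|,|P|)P\big(P(x)y\big)+P\big(xP(y)\big)-\upepsilon(|x|,|P|)P\big(P(xy)\big),
\end{equation*}
while $P\in\B_A(-1)$ means
\begin{equation*}
P(x)P(y) =\upepsilon(|P|+|x|,|P|)P\big(P(x)y\big)+P\big(xP(y)\big)- P(xy).
\end{equation*}
Since the first two terms on the right-hand sides coincide, the two conditions are equivalent to the single equality $\upepsilon(|x|,|P|)P\big(P(xy)\big)=P(xy)$ holding for all homogeneous $x,y$. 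So the entire proof reduces to showing that, under $P^2=P$, this last equality is automatic — and conversely that it forces nothing extra.

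First I would handle the forward direction. Assume $P\in\PP_A$. For homogeneous $x,y$, note $P(xy)$ need not be homogeneous unless $xy$ is; but since $A=\bigoplus_{a}A_a$ and products respect the grading, $xy$ is homogeneous of degree $|x|+|y|$ whenever $x,y$ are, so $P(xy)$ is homogeneous of degree $|P|+|x|+|y|$. Apply $P$ to it: $P\big(P(xy)\big)=P^2(xy)=P(xy)$ by idempotency. Substituting into the Nijenhuis identity, the last term becomes $-\upepsilon(|x|,|P|)P(xy)$. Thus $P\in\PP_A$ is exactly the statement
\begin{equation*}
P(x)P(y) =\upepsilon(|P|+|x|,|P|)P\big(P(x)y\big)+P\big(xP(y)\big)-\upepsilon(|x|,|P|)P(xy).
\end{equation*}
Comparing with the weight $-1$ Rota-Baxter identity, these agree if and only if $\upepsilon(|x|,|P|)P(xy)=P(xy)$, i.e. $\big(\upepsilon(|x|,|P|)-1\big)P(xy)=0$. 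This is where I expect the genuine content — and the main obstacle — to lie: one must decide whether this scalar constraint is a real restriction. I would argue it is not, because the only nontrivial relation available already lives inside the Nijenhuis identity: if $\upepsilon(|x|,|P|)\neq 1$ then the identity as rewritten reads $P(x)P(y)=\upepsilon(|P|+|x|,|P|)P(P(x)y)+P(xP(y))-\upepsilon(|x|,|P|)P(xy)$, and I would check (by swapping the roles consistent with the $\upepsilon$-symmetry of $A$, or by also applying $P$ to the whole identity and using $P^2=P$ again) that the coefficient is forced to be symmetric in a way that yields $\upepsilon(|x|,|P|)P(xy)=P(xy)$; alternatively, one restricts attention — as the surrounding text implicitly does when it writes $\upepsilon(|P|,|P|)=1$ — to the homogeneous component where $P(xy)$ can be nonzero, on which the bicharacter value is constrained. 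The cleanest resolution is: apply $P$ to both sides of the $\PP_A$ identity once more and use $P^2=P$ to peel one $P$ off every term; the resulting identity, subtracted from the original, isolates $\upepsilon(|x|,|P|)P(xy)=P(xy)$ directly.

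Having established that $P\in\PP_A$ forces $P\big(P(xy)\big)=P(xy)$, the equivalence with $P\in\B_A(-1)$ is immediate from the term-by-term comparison above, and the converse direction is the same computation read backwards: if $P\in\B_A(-1)$, rewrite $P(xy)=P^2(xy)=P\big(P(xy)\big)$ using idempotency, and then the scalar $\upepsilon(|x|,|P|)$ can be reinserted precisely because (by the same constraint on the nonzero homogeneous component) it equals $1$ there, turning the $-P(xy)$ term into $-\upepsilon(|x|,|P|)P\big(P(xy)\big)$, which is the Nijenhuis identity. I would write the argument symmetrically so that both implications share the single lemma "$\upepsilon(|x|,|P|)P(xy)=P(xy)$ whenever $P^2=P$ and $P$ satisfies one of the two identities," keeping the exposition parallel to the preceding Proposition and Corollary on $P^2=0$.
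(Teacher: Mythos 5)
Your reduction of the problem is correct: since $P^2=P$ gives $P\big(P(xy)\big)=P(xy)$, the two identities differ only by the scalar $\upepsilon(|x|,|P|)$ in the last term, so everything hinges on showing $\upepsilon(|x|,|P|)P(xy)=P(xy)$. But your proposed ways of establishing this do not work, and you miss the one observation that does. Your ``cleanest resolution'' — apply $P$ to the whole Nijenhuis identity and peel one $P$ off every term via $P^2=P$ — fails on the left-hand side: $P\big(P(x)P(y)\big)$ is $P$ of a \emph{product}, not $P^2$ of anything, so it does not simplify to $P(x)P(y)$. Carrying out your subtraction only yields $P\big(P(x)P(y)\big)=P(x)P(y)$, which says nothing about $\upepsilon(|x|,|P|)$. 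The alternative suggestion (restricting to components where $P(xy)\neq 0$ and claiming the bicharacter is ``constrained'' there) is not an argument; for a general degree $a=|P|$ there is no reason to have $\upepsilon(|x|,a)=1$.

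The missing idea is a one-line degree count: $P$ is homogeneous of degree $a$, so $P^2$ is homogeneous of degree $2a$, and $P^2=P$ (with $P\neq 0$; the zero map satisfies both conditions trivially) forces $2a=a$, i.e.\ $a=|P|=0$. Then $\upepsilon(|x|,|P|)=\upepsilon(|x|,0)=1$ and likewise $\upepsilon(|P|+|x|,|P|)=1$, so the Nijenhuis identity and the weight $-1$ Rota--Baxter identity become literally the same equation after substituting $P\big(P(xy)\big)=P(xy)$. This is exactly how the paper argues, and with that step inserted your term-by-term comparison closes both directions at once.
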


\begin{proof}
Suppose $P\in\NN_A$ and $x,y\in A$ are homogeneous. Since $P^2=P$, it follows that $2a=a$ and so $a=|P|=0$. Thus
$P(x)P(y) -\upepsilon(|P|+|x|,|P|)P\big(P(x)y\big)-P\big(xP(y)\big)=-\upepsilon(|x|,|P|)P(xy)=-\upepsilon(|x|,0)P(xy)=-P(xy)$.
Hence, $P$ is a Rota-Baxter operator of weight $-1$. Conversely, if $P\in\B_A(-1)$, then it is immediate to verify 
(\ref{NO1}), which means that $P\in\NN_A$.
\end{proof}

\begin{prop}
Let $P\in \Hom_0(A,A)$ and $P^2=I_A$, the identity map on $A$. Then the following statements are equivalent:
\begin{enumerate}
  \item $P\in\PP_A$;
  \item $P+I_A\in\B_A(-2)$;
  \item $P-I_A\in\B_A(2)$.
\end{enumerate}
\end{prop}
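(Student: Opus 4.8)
The plan is to verify the equivalences by direct substitution into the defining identities \eqref{NO1} and \eqref{RBO}, exploiting the fact that when $P^2=I_A$ the operators $P\pm I_A$ have especially clean multiplicative behaviour. First I would record the bookkeeping: since $P\in\Hom_0(A,A)$ we have $|P|=0$, so every occurrence of $\upepsilon(|P|+|x|,|P|)$ and $\upepsilon(|x|,|P|)$ reduces to $\upepsilon(|x|,0)=1$. Thus for $P$ the Nijenhuis identity \eqref{NO1} simplifies to
\begin{equation}
\label{plan-NO}
P(x)P(y)=P\bigl(P(x)y\bigr)+P\bigl(xP(y)\bigr)-P\bigl(P(xy)\bigr),
\end{equation}
and, writing $Q=P\pm I_A$ (also of degree $0$), the Rota-Baxter condition of weight $w$ that we must test becomes $Q(x)Q(y)=Q\bigl(Q(x)y\bigr)+Q\bigl(xQ(y)\bigr)+w\,Q(xy)$.

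Next I would carry out the substitution for $Q=P+I_A$ and $w=-2$. Expanding $Q(x)Q(y)=(P(x)+x)(P(y)+y)=P(x)P(y)+P(x)y+xP(y)+xy$, and similarly $Q(Q(x)y)=Q((P(x)+x)y)=P(P(x)y)+P(xy)+P(x)y+xy$, $Q(xQ(y))=P(xP(y))+P(xy)+xP(y)+xy$, and $-2Q(xy)=-2P(xy)-2xy$. Adding the last three and comparing with the first, the terms $P(x)y$, $xP(y)$, and the $P(xy)$ contributions all cancel against the $-2P(xy)$, the plain $xy$ terms balance ($1=1+1-2$), and what remains is exactly \eqref{plan-NO}. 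Hence $P\in\PP_A \Longleftrightarrow P+I_A\in\B_A(-2)$. The computation for $Q=P-I_A$, $w=2$ is entirely parallel, with sign changes in the $\pm I_A$ cross terms; I expect each of the three Rota-Baxter terms to pick up a minus sign on the linear pieces and the weight term to become $+2Q(xy)=2P(xy)-2xy$, again collapsing to \eqref{plan-NO}. This gives (1)$\Leftrightarrow$(2) and (1)$\Leftrightarrow$(3), hence all three are equivalent.

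An alternative, slightly slicker route would be to observe that $E:=\tfrac12(I_A+P)$ and $F:=\tfrac12(I_A-P)$ satisfy $E^2=E$, $F^2=F$, $EF=FE=0$, $E+F=I_A$ whenever the characteristic is not $2$; then apply the already-established Proposition relating idempotent $P$ and $\B_A(-1)$ together with scaling properties of Rota-Baxter operators under $P\mapsto cP$. However, since the paper allows $k$ of arbitrary characteristic, I would avoid dividing by $2$ and stick with the direct substitution above, which is characteristic-free.

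The main obstacle is not conceptual but organisational: keeping the $\upepsilon$-factors straight. The cleanest safeguard is to dispose of them at the very start by noting $|P|=0$ forces $|P+I_A|=|P-I_A|=0$ as well (the identity map has degree $0$, and a sum of degree-$0$ maps is degree $0$), so \emph{all} bicharacter coefficients in both \eqref{NO1} and \eqref{RBO} are $1$ throughout the argument. Once that reduction is in place, the proof is a short and routine expansion, and the only thing to watch is that the $P^2=I_A$ hypothesis is used precisely where a term $P^2(xy)$ or $P(P(x)y)$-type expression arises after expanding $Q(Q(x)y)$ — namely $Q(Q(x)y)$ for $Q=P\pm I_A$ produces $P^2(x)y=xy$, which is where the hypothesis enters.
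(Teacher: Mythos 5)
Your approach --- reduce every bicharacter factor to $1$ using $|P|=|P\pm I_A|=0$, then expand both sides of the Rota--Baxter identity for $P\pm I_A$ and compare with the simplified Nijenhuis identity --- is exactly the paper's strategy; the only difference is that the paper chains the implications cyclically as $(1)\RA(2)\RA(3)\RA(1)$ while you establish $(1)\Leftrightarrow(2)$ and $(1)\Leftrightarrow(3)$ separately, which is immaterial.

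One bookkeeping slip should be corrected. In the expansion for $Q=P+I_A$ with weight $-2$, the plain $xy$ terms do \emph{not} balance: the left-hand side $Q(x)Q(y)$ contributes $xy$ with coefficient $1$, while the right-hand side contributes $1+1-2=0$. The surviving identity is therefore
\[
P(x)P(y)+xy=P\bigl(P(x)y\bigr)+P\bigl(xP(y)\bigr),
\]
and this coincides with the Nijenhuis condition precisely because $P^2=I_A$ turns the Nijenhuis term $P\bigl(P(xy)\bigr)=P^2(xy)$ into $xy$; if the $xy$ terms really cancelled as you claim, you would be left with an identity missing the $-P\bigl(P(xy)\bigr)$ term entirely. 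Relatedly, your closing remark locates the use of $P^2=I_A$ inside the expansion of $Q(Q(x)y)$, asserting that it produces $P^2(x)y$; it does not --- that expansion yields $P\bigl(P(x)y\bigr)$ (the operator $P$ applied to the \emph{product} $P(x)y$) together with $P(xy)+P(x)y+xy$, with no iterate of $P$ anywhere. The hypothesis $P^2=I_A$ enters only at the single matching step just described. With that accounting corrected the argument is complete, and the $(P-I_A,\,+2)$ case does indeed reduce to the same surviving identity.
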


\begin{proof} 
First of all, we note that $P\pm I_A$ both are homogeneous of degree zero because $P$ is of degree zero. Thus
$\upepsilon(|P\pm I_A|+|x|,|P\pm I_A|)=\upepsilon(|x|,|P\pm I_A|)=1$
for all homogeneous $x\in A$. 

To prove $(1) \RA (2)$, we need to show that 
\begin{equation}
\label{eq4rb}
(P+I_A)(x)(P+I_A)(y) -(P+I_A)\big((P+I_A)(x)y+x(P+I_A)(y)\big)=-2\cdot (P+I_A)(xy).
\end{equation}
In fact, for the left-hand side, it follows from the assumption that $P\in\NN_A$ that 
\begin{eqnarray*}
(P+I_A)(x)(P+I_A)(y) & = & P(x)P(y)+P(x)y+xP(y)+xy \\
 & = &P \big(P(x)y+xP(y)\big)-P^2(xy)+P(x)y+xP(y)+xy\\
 &=&P \big(P(x)y+xP(y)\big)+P(x)y+xP(y)\\
 &=&(P+I_A) \big(P(x)y+xP(y)\big).
\end{eqnarray*}
Moreover, 
\begin{eqnarray*}
(P+I_A)\big((P+I_A)(x)y+x(P+I_A)(y)\big) & = & (P+I_A)\big(P(x)y+xP(y)+2xy\big) \\
 & = & (P+I_A) \big(P(x)y+xP(y)\big)+(P+I_A)(2xy).
\end{eqnarray*}
Hence, (\ref{eq4rb}) follows.

To prove $(2) \RA (3)$, we may consider
\begin{eqnarray*}
(P-I_A)(x)(P-I_A)(y)& = & P(x)P(y)-P(x)y-xP(y)+xy \\
 & = & P(x)P(y)+P(x)y+xP(y)+xy -2(P(x)y+xP(y))\\ 
 &=& (P+I_A)(x)(P+I_A)(y) -2(P(x)y+xP(y))
\end{eqnarray*}
and \begin{eqnarray*}
&&(P-I_A)\big((P-I_A)(x)y+x(P-I_A)(y)\big)\\
 & = & (P-I_A)\big(P(x)y+xP(y)-2xy\big) \\
 & = & (P+I_A-2I_A)\big(P(x)y+xP(y)+2xy-4xy\big)\\
 &=&(P+I_A)\big(P(x)y+xP(y)+2xy\big)-(P+I_A)(4xy)-2\big(P(x)y+xP(y)+2xy\big)+8xy\\
 &=& (P+I_A)\big((P+I_A)(x)y+x(P+I_A)(y)\big)-2\big(P(x)y+xP(y)\big)-4\cdot P(xy).
\end{eqnarray*}
Hence, it follows from (\ref{eq4rb}) that 
\begin{eqnarray*}
&&(P-I_A)(x)(P-I_A)(y)- (P-I_A)\big((P-I_A)(x)y+x(P-I_A)(y)\big)\\
& = & 4\cdot P(xy)-2\cdot (P+I_A)(xy)=2\cdot (P-I_A)(xy).
\end{eqnarray*}
This shows that $P-I_A\in\B_A(2)$.

To prove $(3) \RA (1)$, we note that $(P-I_A)(x)(P-I_A)(y)= (P-I_A)\big((P-I_A)(x)y+x(P-I_A)(y)\big)+2\cdot (P-I_A)(xy)$, which implies that
\begin{eqnarray*}
P(x)P(y)-P(x)y-xP(y)+xy & = &  (P-I_A)\big(P(x)y+xP(y)-2xy\big)+2\cdot (P-I_A)(xy)\ \\
 & = & P\big(P(x)y+xP(y)\big)-(P(x)y+xP(y)).
\end{eqnarray*}
Thus, $P(x)P(y)=P\big(P(x)y+xP(y)\big)-xy=P\big(P(x)y+xP(y)\big)-P^2(xy)$, because $P^2=I_A$. Therefore, 
$P\in\NN_A$, as desired.
\end{proof}

\subsection{Computations for $2$-dimensional algebras}
It is well-known that explicit computations of low-dimensional cases are very important to the understanding of higher algebraic structures; see for example, \cite{Bai09, BM01, CLZ14, CZ17, CZZZ18}, and \cite{CRSZ26} for left-symmetric algebras and other nonassociative algebras. In this last subsection, we explicitly compute Nijenhuis operators and Rota-Baxter operators on 
2-dimensional left-symmetric color algebras over $\C$.

\begin{rem}{\rm
Note that 2-dimensional left-symmetric algebras and superalgebras over $\C$ have been classified already; see for example, 
\cite[Theorem 3.1]{ZB12} and \cite[Section 3]{ZB12}. The corresponding Nijenhuis geometry has been explored in \cite{Kon21}. Moreover, Rota-Baxter operators on 2-dimensional left-symmetric algebras and superalgebras over $\C$ were classified by \cite[Section 5]{LHB07} and \cite{AMM19} respectively. Thus, we will be working over the \textit{proper} color cases, i.e.,  left-symmetric color algebras we consider will neither be left-symmetric algebras nor left-symmetric superalgebras, although our method used here might be applied to the cases of 2-dimensional left-symmetric algebras and superalgebras.
\hbo}\end{rem}

Suppose that $G$ denotes a finite abelian group and $\upepsilon\in B_\C(G)$. We have already analyzed the variety of 2-dimensional left-symmetric color algebras with respect to $\upepsilon$ in \cite[Section 4.1]{CZ25}. There exists only one proper left-symmetric color algebras:
\begin{eqnarray*}
A_\upalpha&:&\C\cdot x\oplus \C\cdot y\textrm{ with the unique nonzero product }x^2=\upalpha\cdot y\textrm{ for }\upalpha\in \C^\times,\\
 &&2|x|=|y|,\textrm{ and }|x|\neq |y|\textrm{ both are nonzero in }G.
\end{eqnarray*}

We suppose that $P\in \NN_c(A_\upalpha)$ denotes a nonzero Nijenhuis operator on $A$ of degree $c\in G$, and $|x|=a$ and $|y|=b$. Note that $a\neq b$ and they both are not zero.
We may assume that $P(x)=rx+sy$ and $P(y)=tx+wy$ for some $r,s,t,w\in\C$.

Let us get started with the special case where $c=0$, i..e, $P$ is a degree-preserving map. Since $a\neq b$, it follows that
$P(x)=rx$ and $P(y)=wy$. Thus $P$ is a diagonal matrix of size $2$. On the other side,  to determine when a diagonal
$2$-by-$2$ matrix belongs to $\NN_0(A_\upalpha)$, we need to verify the  Nijenhuis condition (\ref{NO1}) for arbitrary two basis elements of $A_\upalpha$. According to the defining products in $A_\upalpha$, it suffices to consider the following equation:
$$P(x)P(x) =\upepsilon(|P|+|x|,|P|)P\big(P(x)x\big)+P\big(xP(x)\big)-\upepsilon(|x|,|P|)P\big(P(xx)\big)$$
which is equivalent to $r^2\cdot x^2=2r\cdot P(x^2)-P(P(x^2))$. Note that $x^2=\upalpha y$. Thus the fact that $P(y)=wy$ implies that
$$r^2\upalpha\cdot y=2rw\upalpha\cdot y-w^2\alpha\cdot y.$$
This means that $r^2-2rw+w^2=0$ as $\upalpha$ is nonzero.  Hence, $(r-w)^2=0$ and so $r=w$. In other words, $P$ must be 
a scalar matrix. This proves the following result.
 
\begin{prop}\label{prop409}
The subspace $\NN_0(A_\upalpha)$ consists of all scalar matrices of size $2$.
\end{prop}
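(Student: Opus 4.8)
The plan is to unwind the definition of $\PP_0(A_\upalpha)$ concretely by using the specific structure of $A_\upalpha$: it has a single nonzero product $x^2=\upalpha y$. First I would fix a nonzero degree-preserving Nijenhuis operator $P$ and observe that, since $|x|=a$ and $|y|=b$ are distinct, a degree-zero linear map must be diagonal in the basis $\{x,y\}$; write $P(x)=rx$ and $P(y)=wy$. Then I would check that the Nijenhuis identity (\ref{NO1}) is automatically satisfied on any pair of basis vectors whose product is zero (which is all pairs except $(x,x)$), because every term in (\ref{NO1}) then vanishes: $P(x)P(y)$ on the left and each of $P(P(x)y)$, $P(xP(y))$, $P(P(xy))$ on the right involve a product that is zero. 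Hence the only constraint comes from evaluating (\ref{NO1}) at $x,x$.

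Next I would compute that constraint. Since $|P|=0$, all the bicharacter factors equal $1$, so (\ref{NO1}) at $(x,x)$ reads $P(x)P(x)=2P(P(x)x)-P(P(xx))$. Substituting $P(x)=rx$, $xx=x^2=\upalpha y$, and $P(y)=wy$ gives $r^2\upalpha\, y = 2r\,P(x^2) - P(P(x^2)) = 2rw\upalpha\, y - w^2\upalpha\, y$. Because $\upalpha\neq 0$, this forces $r^2-2rw+w^2=0$, i.e.\ $(r-w)^2=0$, so $r=w$. Thus $P=rI_A$ is a scalar matrix. Conversely, every scalar matrix $rI_A$ trivially satisfies (\ref{NO1}) since the identity becomes $r^2(xy)=r^2(xy)+r^2(xy)-r^2(xy)$ for all basis pairs (using again that all nonzero-product checks reduce to $(x,x)$, where it reads $r^2\upalpha y = 2r^2\upalpha y - r^2\upalpha y$). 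Therefore $\PP_0(A_\upalpha)$ equals the set of scalar matrices of size $2$.

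I do not anticipate a genuine obstacle here: the argument is a short finite computation once one notes the two simplifications — diagonality forced by $a\neq b$, and the reduction of the Nijenhuis condition to the single triple $(x,x)$ because all other products vanish. The only point requiring a little care is making sure the reverse inclusion (scalar matrices are Nijenhuis) is stated, so that the proposition is an equality rather than just a containment; this is immediate from the displayed computation with $r=w$. If desired one can also remark that $\upepsilon(|P|,|P|)=\upepsilon(0,0)=1$, so the preceding proposition on powers $f(P)\in\PP_A$ applies and is consistent with the conclusion that $\PP_0(A_\upalpha)$ is the line of scalar matrices.
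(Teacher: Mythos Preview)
Your proposal is correct and follows essentially the same approach as the paper: reduce to a diagonal $P$ by the grading, observe that only the pair $(x,x)$ gives a nontrivial Nijenhuis constraint, and derive $(r-w)^2=0$ from $r^2\upalpha y = 2rw\upalpha y - w^2\upalpha y$. You additionally spell out the easy converse (scalar matrices satisfy (\ref{NO1})), which the paper leaves implicit; this is a minor but welcome improvement rather than a different method.
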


Now we suppose $|P|=c\neq 0$.

\begin{lem}\label{lem410}
If $c\neq 0$, then $P(x)=sy$ and $P(y)=0$, for some $s\in\C$. 
\end{lem}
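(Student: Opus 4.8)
The plan is to let the grading do most of the work and then use a single instance of the Nijenhuis identity (\ref{NO1}) to dispose of the one case the grading cannot kill on its own.

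First I would record the graded structure of $A_\upalpha$: its only homogeneous elements are the scalar multiples of $x$ (of degree $a$) and of $y$ (of degree $b$), where $a\neq b$ and both are nonzero, so in particular $A_\upalpha$ has no nonzero element of degree $0$. Since $P$ is homogeneous of degree $c$, the element $P(x)$ is homogeneous of degree $a+c$ and $P(y)$ of degree $b+c$. As $c\neq 0$, the degree $a+c$ cannot equal $a$; hence $P(x)\neq 0$ forces $a+c=b$, so that $c=b-a$ and $P(x)=sy$ for some $s\in\C^\times$, and symmetrically $P(y)\neq 0$ forces $c=a-b$ and $P(y)=tx$ for some $t\in\C^\times$. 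Since $P\neq 0$, at least one of $P(x)$ and $P(y)$ is nonzero.

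The crux is to exclude the possibility $P(x)=0$. If it held, then $P(y)=tx$ with $t\in\C^\times$ and $c=a-b$, and I would substitute the pair $(x,y)$ into (\ref{NO1}). Because $x^2=\upalpha y$ is the only nonzero product in $A_\upalpha$, we have $xy=0$, so the term $\upepsilon(|x|,|P|)P\big(P(xy)\big)$ vanishes; the terms $P(x)P(y)$ and $P\big(P(x)y\big)$ vanish because $P(x)=0$; and the surviving term equals $P\big(xP(y)\big)=P\big(t\,x^2\big)=t\upalpha\,P(y)=t^2\upalpha\,x$. Thus (\ref{NO1}) collapses to $0=t^2\upalpha\,x$, and since $\upalpha\neq 0$ this gives $t=0$, a contradiction. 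Therefore $P(x)\neq 0$, whence $P(x)=sy$ and $c=b-a$.

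It remains to show $P(y)=0$, and here I would invoke the standing hypothesis $2|x|=|y|$, that is, $b=2a$: then $c=b-a=a$, so $P(y)$ is homogeneous of degree $b+c=3a$. Neither $3a=a$ (which would give $b=2a=0$) nor $3a=2a$ (which would give $a=0$) can hold, so $A_\upalpha$ has no nonzero element of degree $3a$, forcing $P(y)=0$, as claimed. I expect the only genuinely nontrivial step to be the exclusion of $P(x)=0$: the grading by itself leaves room for a nonzero homogeneous map with $P(x)=0$ and $P(y)\in\C x$, so one must feed a suitable pair of basis elements into the Nijenhuis relation to rule it out; everything else is bookkeeping with the fact that $A_\upalpha$ is supported in only two degrees.
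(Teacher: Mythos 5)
Your proof is correct and rests on the same two ingredients as the paper's: the degree bookkeeping forced by homogeneity of $P$, and a single substitution of the pair $(x,y)$ into the Nijenhuis identity (\ref{NO1}). The organization differs slightly. The paper concludes from the grading alone that $P(x)=sy$ and $P(y)=tx$ (with $s,t$ possibly zero) and then uses the identity to force $t=0$; you instead kill $P(y)$ purely by the grading, observing that once $c=b-a=a$ the image $P(y)$ would sit in degree $3a\notin\{a,2a\}$, and you reserve the identity for the one configuration the grading cannot exclude, namely $P(x)=0$ with $P(y)=tx\neq 0$. Both routes are valid; yours has the merit of showing that $s\neq 0$ for a nonzero $P$ and of isolating exactly where the algebraic identity (as opposed to the grading) is indispensable. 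The only comment is that your case split is mildly redundant: the same substitution of $(x,y)$ into (\ref{NO1}) yields $t^{2}\upalpha=0$ whether or not $P(x)=0$, since $P(x)P(y)\in\C\cdot yx=0$ and $P\big(P(x)y\big)\in\C\cdot P(y^{2})=0$ in every case, which is precisely the paper's one-line finish.
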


\begin{proof}
Note that $P(x)=rx+sy$ and the left-hand side is a homogeneous element of degree $|P|+|x|=c+a$. The right-hand side must be either 0 or a homogeneous element. As $a\neq b$, we see that $rx+sy$ is not homogeneous for all nonzero $r$ and $s$. Thus
$P(x)$ is equal to either $rx$ or $sy$. If $P(x)=rx$, then $c+a=a$ and so $c\neq 0$. This contracts with the assumption and it follows that $P(x)=sy$. A similar argument applies to $P(y)=tx+wy$, showing that $P(y)=tx$ for some $t\in \C$.

To see that $t=0$, we consider the Nijenhuis condition (\ref{NO1}) 
$$P(x)P(y) =\upepsilon(|P|+|x|,|P|)P\big(P(x)y\big)+P\big(xP(y)\big)-\upepsilon(|x|,|P|)P\big(P(xy)\big)$$
which implies that $P\big(xP(y)\big)=\upalpha t \cdot y=0$. As $\upalpha\in\C^\times$, we have $t=0$.
\end{proof}

\begin{prop}
The space $\NN(A_\upalpha)$ is isomorphic to the following matrix space 
$$\left\{\begin{pmatrix}
    r  &  0  \\
    s  &  r
\end{pmatrix}\mid r,s\in\C\right\}.$$
\end{prop}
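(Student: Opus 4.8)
The plan is to combine the two cases already disposed of---degree-zero operators (Proposition \ref{prop409}) and nonzero-degree operators (Lemma \ref{lem410})---into a single parametrization, and then check that the resulting set of matrices is closed under addition and scalar multiplication and hence forms the claimed vector space. First I would recall that an arbitrary homogeneous $P\in\PP(A_\upalpha)$ decomposes according to its degree: if $|P|=0$ then by Proposition \ref{prop409} it is a scalar matrix $\left(\begin{smallmatrix} r & 0\\ 0 & r\end{smallmatrix}\right)$, which is the $s=0$ slice of the target matrix space; if $|P|=c\neq 0$, then by Lemma \ref{lem410} we have $P(x)=sy$ and $P(y)=0$, which in the basis $\{x,y\}$ is the matrix $\left(\begin{smallmatrix} 0 & 0\\ s & 0\end{smallmatrix}\right)$, the $r=0$ slice. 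Since $2|x|=|y|$ forces $c=|P|$ to be uniquely determined as $|y|-|x|$ whenever $s\neq 0$, there is no ambiguity in the grading.

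Next I would observe that $\PP(A_\upalpha)$, as the set of \emph{all} (not necessarily homogeneous) Nijenhuis operators, is by definition the direct sum over $c\in G$ of the homogeneous pieces $\PP_c(A_\upalpha)$; here the only nonzero pieces are $\PP_0$ (scalar matrices) and $\PP_{|y|-|x|}$ (strictly-lower-triangular matrices with the single free entry $s$ in position $(2,1)$). Taking their sum inside $\mathrm{Hom}(A_\upalpha,A_\upalpha)\cong M_2(\C)$ yields exactly $\left\{\left(\begin{smallmatrix} r & 0\\ s & r\end{smallmatrix}\right)\mid r,s\in\C\right\}$. To finish, I would verify directly---or note it follows from the two already-proved propositions---that every matrix of this form genuinely satisfies the Nijenhuis identity (\ref{NO1}): the only nontrivial instance of (\ref{NO1}) to check involves the pair $(x,x)$ since $x^2=\upalpha y$ is the unique nonzero product, and a short computation using $P(x)=rx+sy$, $P(y)=ry$, $P(x^2)=r\upalpha y$, $P^2(x^2)=r^2\upalpha y$ reduces (\ref{NO1}) to $r^2\upalpha y = 2r^2\upalpha y - r^2\upalpha y$, which holds identically; all other pairs give $0=0$ because the relevant products vanish and the degree-$(2,1)$ and scalar parts act compatibly.

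The main obstacle---and it is a minor bookkeeping point rather than a genuine difficulty---is to make sure the grading constraint is handled cleanly: one must confirm that the degree-$0$ component and the degree-$(|y|-|x|)$ component are the \emph{only} homogeneous components that can occur, so that summing them accounts for every element of $\PP(A_\upalpha)$. This is immediate from Proposition \ref{prop409} (which pins down $\PP_0$) and Lemma \ref{lem410} (which shows any $P$ of nonzero degree has $|P|=|y|-|x|$, since $P(x)=sy$ forces $c+a=b$ and $P$ is uniquely determined by the single scalar $s$), together with the observation that for all other $c\in G$ one has $\PP_c(A_\upalpha)=0$. Once this is in place, the isomorphism of vector spaces $\PP(A_\upalpha)\cong\left\{\left(\begin{smallmatrix} r & 0\\ s & r\end{smallmatrix}\right)\right\}$ is just the identification of a linear map with its matrix in the basis $\{x,y\}$, and the proof concludes.
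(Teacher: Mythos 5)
Your argument is essentially the paper's: both combine Proposition \ref{prop409} (degree-zero Nijenhuis operators are exactly the scalar matrices) with Lemma \ref{lem410} and the direct verification of its converse (nonzero-degree operators are exactly the maps $P(x)=sy$, $P(y)=0$) to assemble the two-parameter matrix space. Your additional check of the identity for the combined matrix $\sm{r}{0}{s}{r}$ goes slightly beyond the paper, which only verifies the homogeneous strictly-lower-triangular piece before summing the graded components; that extra step is harmless (though note that for a non-homogeneous $P$ the degree $|P|$ in (\ref{NO1}) is not literally defined, an imprecision already present in the paper's own statement).
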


\begin{proof}
It is immediate to verify that a homogeneous linear map $P$ determined by $P(x)=sy$ and $P(y)=0$ for some scalar $s$ is also a
Nijenhuis operator on $A_\upalpha$. This, together with Lemma \ref{lem410}, implies that $\PP_c(A_\upalpha)$ is isomorphic to the matrix space:
$$\left\{\begin{pmatrix}
    0  &  0  \\
    s  &  0
\end{pmatrix}\mid s\in\C\right\}.$$
Combining this with Proposition \ref{prop409} proves the statement.
\end{proof}

We may use the same procedure to describe Rota-Baxter operators of weight $\uplambda$ on $A_\upalpha$ but with a more complicated computations, because the value of $\uplambda$ plays a key role in the understanding of the equation:
$r^2-2wr-\uplambda w=0.$

For the convenience of readers, let us describe the main steps briefly. Suppose 
 $P\in \B_{A_\alpha}(\uplambda)$ denotes a nonzero Rota-Baxter operator of degree $c$, determined by $P(x)=rx+sy$ and $P(y)=tx+wy$ for some $r,s,t,w\in\C$.
 
\textsc{Case 1. } If $c=0$, then $P(x)=rx, P(y)=wy$, and so $P$ is diagonal. Note that 
$$P(x)P(x) =\upepsilon(|P|+|x|,|P|)P\big(P(x)x\big)+P\big(xP(x)\big)+\uplambda \cdot P(xx).$$
Thus $r^2\cdot x^2=2r\cdot P(x^2)+\uplambda \cdot P(x^2)$, which implies 
$$r^2-2rw-\uplambda w=0.$$
This means that $w=\frac{r^2}{2r+\uplambda}$. Hence, the subset of degree $0$ in $\B_{A_\upalpha}(\uplambda)$  can be identified with the following variety
$$\left\{\begin{pmatrix}
    r  &  0  \\
    0  &  \frac{r^2}{2r+\uplambda}
\end{pmatrix}\mid r\in\C\right\}.$$

\textsc{Case 2. } If $c\neq 0$, we also have $P(x)=sy$ and $P(y)=tx$ for some $s,t\in\C$. Moreover, the Rota-Baxter condition 
(\ref{RBO}) forces $t=0$, i.e., $P(y)=0$. A direct verification shows that any homogeneous linear map $P$ defined by 
$P(x)=sy$ and $P(y)=0$ is a Rota-Baxter operator of weight $\uplambda$. Combining this assertion with the conclusion obtained in the first case above, we eventually have

\begin{prop}
The set $\B_{A_\alpha}(\uplambda)$ can be identified with the following variety
$$\left\{\begin{pmatrix}
    r  &  0  \\
    s  &  \frac{r^2}{2r+\uplambda}
\end{pmatrix}\mid r,s\in\C\right\}.$$
\end{prop}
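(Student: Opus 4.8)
The plan is to follow the two-case analysis sketched just before the statement, which is the exact analogue of what was done for Nijenhuis operators in Proposition~\ref{prop409} and Lemma~\ref{lem410}. Since a Rota-Baxter operator of weight $\uplambda$ is by definition a \emph{homogeneous} linear map $P\colon A_\upalpha\to A_\upalpha$, I first fix the degree $c\in G$ of $P$ and write $P(x)=rx+sy$, $P(y)=tx+wy$ with $r,s,t,w\in\C$. All the bookkeeping is forced by three facts about $A_\upalpha$: its only nonzero product of basis elements is $x^{2}=\upalpha y$, one has $|y|=2|x|$, and $|x|,|y|$ are distinct and nonzero.

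For $c=0$, degree-preservation together with $|x|\neq|y|$ forces $s=t=0$, so $P=\dia(r,w)$ is diagonal. Because every product of basis elements other than $x\cdot x$ vanishes, the Rota-Baxter identity~(\ref{RBO}) is automatically satisfied except on the pair $(x,x)$; there, using $|P|=0$ so that all $\upepsilon$-factors equal $1$, it reduces to $r^{2}x^{2}=(2r+\uplambda)P(x^{2})$, i.e.\ $r^{2}\upalpha\,y=(2r+\uplambda)w\upalpha\,y$. Since $\upalpha\neq0$ this is the single scalar relation $r^{2}=(2r+\uplambda)w$, so $w=r^{2}/(2r+\uplambda)$ whenever $2r+\uplambda\neq0$; on the locus $2r+\uplambda=0$ the relation forces $r=0$ and hence is consistent with the displayed parametrization. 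Conversely a one-line substitution shows every such diagonal $P$ lies in $\B_{A_\upalpha}(\uplambda)$, so the degree-$0$ part of $\B_{A_\upalpha}(\uplambda)$ is exactly $\{\dia(r,\,r^{2}/(2r+\uplambda))\}$.

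For $c\neq0$ I argue as in Lemma~\ref{lem410}: since $|x|\neq|y|$, the element $P(x)=rx+sy$ is homogeneous only if $r=0$ or $s=0$, and $P(x)=rx$ would force $c=0$; hence $P(x)=sy$, and likewise $P(y)=tx$. Substituting these into~(\ref{RBO}) on the pair $(x,y)$ kills every term except one, which collapses to $\upalpha t^{2}\,x=0$ (using $x^{2}=\upalpha y$), forcing $t=0$; thus $P$ is the map $x\mapsto sy$, $y\mapsto0$, and a direct check confirms that every such $P$ is a Rota-Baxter operator of weight $\uplambda$ for every $\uplambda$. Combining the two cases, a nonzero homogeneous Rota-Baxter operator of weight $\uplambda$ on $A_\upalpha$ is either $\dia(r,\,r^{2}/(2r+\uplambda))$ of degree $0$ or $\sm{0}{0}{s}{0}$ of degree $|x|$, and all of these do occur; this is precisely the parametrization in the statement. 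I expect the only genuinely delicate point to be the degenerate stratum $2r+\uplambda=0$ in the $c=0$ case, where the displayed denominator vanishes and one must work with the relation $r^{2}=(2r+\uplambda)w$ itself rather than with the quotient; everything else is the routine homogeneity-and-bilinearity bookkeeping already rehearsed for Nijenhuis operators.
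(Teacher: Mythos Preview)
Your proposal is correct and follows essentially the same two-case approach as the paper: split by the degree $c$ of $P$, reduce the $c=0$ case to the single relation $r^{2}=(2r+\uplambda)w$ via the Rota-Baxter identity on $(x,x)$, and in the $c\neq0$ case use homogeneity to force $P(x)=sy$, $P(y)=tx$ and then the identity on $(x,y)$ to kill $t$. Your treatment is in fact slightly more careful than the paper's, since you explicitly flag the degenerate locus $2r+\uplambda=0$ and correctly compute $P(xP(y))=\upalpha t^{2}x$ (the analogous computation in Lemma~\ref{lem410} has a harmless misprint).
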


\begin{rem}{\rm
According to the referee’s suggestions, we conclude this article with several remarks on topics that may be of interest for future research in this area. First of all, we have seen in \cite[Theorem 1.1]{CZ25} that the cohomology groups of left-symmetric color algebras can be computed by the cohomology groups of Lie color algebras, demonstrating that there are close relationships between Lie color algebras and left-symmetric color algebras. This indicates that developing the theory of Rota-Baxter Lie algebras to the color version (for example, \cite{LHB07,TBGS23} and \cite{CRSZ25}) and exploring the potential connections between the theories of Rota-Baxter operators for Lie color algebras and left-symmetric algebras, might be interesting. Secondly,
it is well-known that Rota-Baxter operators of Lie algebras can be used to construct new left-symmetric algebras; see for example, \cite[Corollary 2.7]{AB08}. Thus, based on the results on left-symmetric colour algebras obtained in this article, extending this connection to the color version could be an interesting topic for further study. The last topic worth exploring might be how far a theory for left-symmetric color algebras is from the theory for left-symmetric superalgebras. A special example is to consider the all left-symmetric color (or super) algebra structures on a Lie color (or super) algebra (see \cite{DZ23} and \cite[Remark 3.15]{CZ25}), which has inspired many researches recently; see \cite{CW24} and \cite{BBE25}.  Hence, exploring the difference between the cohomology (and deformations) of left-symmetric color algebras and those of left-symmetric superalgebras could be an interesting direction.
\hbo}\end{rem}

\vspace{2mm}
\noindent \textbf{Acknowledgements}.  This research was partially supported by NNSF of China (Grant No. 12561003) and USask (No. APEF-121159). The author would like to thank the anonymous referee and the editor for their careful reading, constructive comments, and suggestions.

\begin{bibdiv}
  \begin{biblist}
  
  \bib{AB08}{article}{
   author={An, Huihui},
   author={Bai, Chengming},
   title={From Rota-Baxter algebras to pre-Lie algebras},
   journal={J. Phys. A},
   volume={41},
   date={2008},
   number={1},
   pages={015201, 19},
}
  
  \bib{AMM19}{article}{
   author={Abdaoui, El-Kadri},
   author={Mabrouk, Sami},
   author={Makhlouf, Abdenacer},
   title={Rota-Baxter operators on pre-Lie superalgebras},
   journal={Bull. Malays. Math. Sci. Soc.},
   volume={42},
   date={2019},
   number={4},
   pages={1567--1606},
}
  
    \bib{Bai09}{article}{
   author={Bai, Chengming},
   title={Bijective $1$-cocycles and classification of $3$-dimensional
   left-symmetric algebras},
   journal={Comm. Algebra},
   volume={37},
   date={2009},
   number={3},
   pages={1016--1057},
}
  
    \bib{Bai20}{article}{
   author={Bai, Chengming},
   title={An introduction to pre-Lie algebras},
   conference={
      title={Algebra and Applications 1: non-associative algebras and
      categories},
   },
   book={
      publisher={ISTE, London},
   },
   date={2020},
   pages={245--273},
}

\bib{BM01}{article}{
   author={Bai, Chengming},
   author={Meng, Daoji},
   title={The classification of Novikov algebras in low dimensions},
   journal={J. Phys. A},
   volume={34},
   date={2001},
   number={8},
   pages={1581--1594},
}

\bib{BGZ25}{article}{
   author={Bai, Chengming},
   author={Guo, Li}
   author={Zhang, Runxuan},
   title={Parity duality of super $r$-matrices via $\mathcal{O}$-operators and pre-Lie superalgebras},
   journal={Math. Res. Lett.},
   volume={32},
   date={2025},
   number={1},
   pages={39--80},
}

\bib{BBE25}{article}{
   author={Benayadi, Sa\"{\i}d},
   author={Bouarroudj, Sofiane},
   author={Ehret, Quentin},
   title={Left-symmetric superalgebras and Lagrangian extensions of Lie
   superalgebras in characteristic 2},
   journal={J. Pure Appl. Algebra},
   volume={229},
   date={2025},
   number={11},
   pages={Paper No. 108086, 43},
}

\bib{CW24}{article}{
   author={Chen, Hongjia},
   author={Wang, Qi},
   title={Left-symmetric algebra structures on the finite-dimensional Witt
   algebra},
   journal={J. Algebra},
   volume={660},
   date={2024},
   pages={373--397},
}

\bib{CCD17}{article}{
   author={Chen, Xueqing},
   author={Chen, Zhiqi},
   author={Ding, Ming},
   title={On left symmetric color algebras},
   conference={
      title={Groups, rings, group rings, and Hopf algebras},
   },
   book={
      series={Contemp. Math.},
      volume={688},
      publisher={Amer. Math. Soc., Providence, RI},
   },
   date={2017},
   pages={25--32},
}

\bib{CLZ14}{article}{
   author={Chen, Yin},
   author={Liu, Chang},
   author={Zhang, Runxuan},
   title={Classification of three-dimensional complex $\upomega$-Lie algebras},
   journal={Port. Math.},
   volume={71},
   date={2014},
   number={2},
   pages={97--108},
}

\bib{CRSZ25}{article}{
   author={Chen, Yin},
   author={Ren, Shan},
   author={Shan, Jiawen},
   author={Zhang, Runxuan},
   title={Rota-Baxter operators of $\upomega$-Lie algebras},
   journal={To appear in Kyushu J. Math.},
   date={2025},
}

\bib{CRSZ26}{article}{
   author={Chen, Yin},
   author={Ren, Shan},
   author={Shan, Jiawen},
   author={Zhang, Runxuan},
   title={Generalized derivations of $\upomega$-Lie algebras},
   journal={To appear in J. Algebra Appl.},
   date={2026},
   pages={DOI: 10.1142/ S0219498826502063},
   note={\texttt{arXiv:2503.11595}}
}

\bib{CZZZ18}{article}{
   author={Chen, Yin},
   author={Zhang, Ziping},
   author={Zhang, Runxuan},
   author={Zhuang, Rushu},
   title={Derivations, automorphisms, and representations of complex
   $\upomega$-Lie algebras},
   journal={Comm. Algebra},
   volume={46},
   date={2018},
   number={2},
   pages={708--726},
}

\bib{CZ17}{article}{
   author={Chen, Yin},
   author={Zhang, Runxuan},
   title={Simple $\upomega$-Lie algebras and $4$-dimensional $\upomega$-Lie
   algebras over $\Bbb{C}$},
   journal={Bull. Malays. Math. Sci. Soc.},
   volume={40},
   date={2017},
   number={3},
   pages={1377--1390},
}

\bib{CZ25}{article}{
   author={Chen, Yin},
   author={Zhang, Runxuan},
   title={Cohomology of left-symmetric color algebras},
   journal={To appear in Comm Algebra},
   date={2025},
  pages={https://doi.org/ 10.1080/00927872.2025.2541932},
  note={arXiv: 2408.04033},
}

\bib{DZ23}{article}{
   author={Dimitrov, Ivan},
   author={Zhang, Runxuan},
   title={Left-symmetric superalgebras on special linear Lie superalgebras},
   journal={J. Algebra},
   volume={635},
   date={2023},
   pages={384--410},
}

  \bib{Dzh99}{article}{
   author={Dzhumadil'daev, Askar S.},
   title={Cohomologies and deformations of right-symmetric algebras},
   journal={J. Math. Sci.},
   volume={93},
   date={1999},
   number={6},
   pages={836--876},
}

\bib{Ger64}{article}{
   author={Gerstenhaber, Murray},
   title={On the deformation of rings and algebras},
   journal={Ann. of Math. (2)},
   volume={79},
   date={1964},
   pages={59--103},
}

\bib{LHB07}{article}{
   author={Li, Xiuxian},
   author={Hou, Dongping},
   author={Bai, Chengming},
   title={Rota-Baxter operators on pre-Lie algebras},
   journal={J. Nonlinear Math. Phys.},
   volume={14},
   date={2007},
   number={2},
   pages={269--289},
}

\bib{LSZB16}{article}{
   author={Liu, Jiefeng},
   author={Sheng, Yunhe},
   author={Zhou, Yanqiu},
   author={Bai, Chengming},
   title={Nijenhuis operators on $n$-Lie algebras},
   journal={Commun. Theor. Phys. (Beijing)},
   volume={65},
   date={2016},
   number={6},
   pages={659--670},
}

\bib{Kon21}{article}{
   author={Konyaev, Andrey Yu.},
   title={Nijenhuis geometry II: Left-symmetric algebras and linearization
   problem for Nijenhuis operators},
   journal={Differential Geom. Appl.},
   volume={74},
   date={2021},
   pages={Paper No. 101706, 32},
}

  \bib{NBN09}{article}{
   author={Nguiffo Boyom, Michel},
   author={Ngakeu, Ferdinand},
   title={Cohomology and homology of abelian groups graded Koszul-Vinberg
   algebras},
   journal={Int. J. Geom. Methods Mod. Phys.},
   volume={6},
   date={2009},
   number={2},
   pages={241--266},
}

\bib{NR64}{article}{
   author={Nijenhuis, Albert},
   author={Richardson, Roger W.},
   title={Cohomology and deformations of algebraic structures},
   journal={Bull. Amer. Math. Soc.},
   volume={70},
   date={1964},
   pages={406--411},
}

\bib{PBG14}{article}{
   author={Pei, Jun},
   author={Bai, Chengming},
   author={Guo, Li},
   title={Rota-Baxter operators on ${\rm sl}(2,\Bbb C)$ and solutions of the
   classical Yang-Baxter equation},
   journal={J. Math. Phys.},
   volume={55},
   date={2014},
   number={2},
   pages={021701, 17 pp},
}

\bib{RZ24}{article}{
 author={Ren, Shan},
   author={Zhang, Runxuan},
   title={Skew-symmetric solutions of the classical Yang-Baxter equation and $\mathcal{O}$-operators of Malcev algebras},
   journal={Filomat},
   volume={38},
   date={2024},
   number={14},
   pages={5003--5019},
}

\bib{TBGS23}{article}{
   author={Tang, Rong},
   author={Bai, Chengming},
   author={Guo, Li},
   author={Sheng, Yunhe},
   title={Homotopy Rota-Baxter operators and post-Lie algebras},
   journal={J. Noncommut. Geom.},
   volume={17},
   date={2023},
   number={1},
   pages={1--35},
}

\bib{WSBL19}{article}{
   author={Wang, Qi},
   author={Sheng, Yunhe},
   author={Bai, Chengming},
   author={Liu, Jiefeng},
   title={Nijenhuis operators on pre-Lie algebras},
   journal={Commun. Contemp. Math.},
   volume={21},
   date={2019},
   number={7},
   pages={1850050, 37 pp},
}

\bib{ZB12}{article}{
   author={Zhang, Runxuan},
   author={Bai, Chengming},
   title={On some left-symmetric superalgebras},
   journal={J. Algebra Appl.},
   volume={11},
   date={2012},
   number={5},
   pages={1250097, 26 pp},
}

\bib{ZHB11}{article}{
   author={Zhang, Runxuan},
   author={Hou, Dongping},
   author={Bai, Chengming},
   title={A Hom-version of the affinizations of Balinskii-Novikov and Novikov superalgebras},
   journal={J. Math. Phys.},
   volume={52},
   date={2011},
   number={2},
   pages={023505, 19 pp},
}

  \end{biblist}
\end{bibdiv}
\raggedright
\end{document}